\documentclass{amsart}

\usepackage[margin=1in]{geometry} 
\usepackage{indentfirst}
\usepackage[english]{babel}
\usepackage{setspace}
\usepackage{thm-restate}
\usepackage{hyperref}
\hypersetup{
	colorlinks,
	linkcolor={MidnightBlue},
	citecolor={OliveGreen}
}

\usepackage{amsfonts,amsthm,latexsym,amsmath,amssymb,amscd,amsmath, mathrsfs, epsf,bm}
\usepackage[utf8]{inputenc}
\usepackage[dvipsnames]{xcolor}
\usepackage{tikz}
\usetikzlibrary{positioning, arrows.meta, 3d}
\usetikzlibrary{arrows,matrix}
\usepackage[nameinlink,noabbrev]{cleveref}
\usepackage{graphicx}
\usepackage[font = footnotesize]{caption}
\usepackage{enumitem}

\usepackage{booktabs}
\usepackage{comment}

\usepackage{nicematrix}
\usepackage[dvipsnames]{xcolor}
\definecolor{commentgreen}{RGB}{2,112,10}
\definecolor{eminence}{RGB}{108,48,130}

\definecolor{frenchplum}{RGB}{129,20,83}

\usepackage[normalem]{ulem}

\usepackage{listings}
\lstdefinelanguage{code}{
basicstyle=\small\ttfamily,
alsoletter=",
classoffset=1,
keywords={gb, eliminate, saturate, diff, degree, flatten, apply, tensor, product},
keywordstyle={\color{teal}},
classoffset=2,
morekeywords={from, to, list, terms, toList, entries, for, end, if, return},
keywordstyle={\color{blue}},
classoffset=3,
morekeywords={QQ},
keywordstyle={\color{frenchplum}},
classoffset=4,
morekeywords={ideal, matrix, gens},
keywordstyle={\color{teal}},
xleftmargin=1.5cm,
xrightmargin=1em,
columns=fullflexible,
keepspaces=true,
stepnumber=1,
numbers=none,
captionpos=b,
showspaces=false,
frame=none
}

\definecolor{weborange}{RGB}{255,165,0}
\definecolor{darkgray}{rgb}{.4,.4,.4}
\usepackage{mathtools}

\newtheorem{theorem}{Theorem}
\numberwithin{theorem}{section}
\newtheorem{proposition}[theorem]{Proposition}
\newtheorem{lemma}[theorem]{Lemma}
\newtheorem{corollary}[theorem]{Corollary}

\theoremstyle{definition}
\newtheorem{definition}[theorem]{Definition}
\newtheorem{remark}[theorem]{Remark}
\newtheorem{example}[theorem]{Example}
\newtheorem*{claim*}{\indent Claim}
\newtheorem*{problem*}{Problem}
\newtheorem{notation}[theorem]{Notation}

\newcommand{\Eig}{\mathrm{Eig}}
\newcommand{\W}{\mathcal{W}}
\newcommand{\PP}{\mathbb{P}}
\newcommand{\CC}{\mathbb{C}}
\newcommand{\RR}{\mathbb{R}}

\newcommand{\NN}{\mathbb{N}}

\newcommand{\rk}{\mathop{\rm rk}\nolimits}
\newcommand{\codim}{\mathop{\rm codim}\nolimits}

\newcommand{\sym}{\mathop{\rm Sym}\nolimits}

\title[On Waring Rank Jumps via critical rank-one approximations]{On Waring Rank Jumps \\ via critical rank-one approximations}
\author{Alessandro Oneto \and Pierpaola Santarsiero \and Ettore Teixeira Turatti}

\newcommand{\Addresses}{{% additional braces for segregating \footnotesize
  \bigskip
  \footnotesize

\textsc{Alessandro Oneto, 
Department of Mathematics (DIMA), University of Genoa, Via Dodecaneso 35, Genova, Italy}\par\nopagebreak
 \textit{E-mail address}:  \email{alessandro.oneto@unige.it}

\medskip

  \textsc{Pierpaola Santarsiero, Dipartimento di Ingegneria Industriale e Scienze Matematiche, Universit\`a Politecnica delle Marche, Via Brecce Bianche
I-60131 Ancona, Italy}\par\nopagebreak
  \textit{E-mail address}: \email{p.santarsiero@staff.univpm.it}

  \medskip

\textsc{Ettore Teixeira Turatti, Faculty of Mathematics, Informatics, and Mechanics, University of Warsaw, Banacha 2, 02-097 Warsaw, Poland} \par\nopagebreak
 \textit{E-mail address}: \email{e.teixeira-turatti@uw.edu.pl}
 }}

\subjclass[2020]{14N07, 15A18, 15A69}
\keywords{Symmetric tensors, Waring decomposition, Eigenvectors}

\begin{document}
\maketitle
\begin{abstract}
    We investigate whether eigenvectors, also known as critical rank-one approximations, of a symmetric tensor can be used to increase or decrease its Waring rank. First, we study the variety of degree-$d$ rank-$r$ forms which admit an eigenvector as part of a minimal Waring decomposition. In the case of binary forms, we show that this is of codimension-one in the $r$-th secant variety of the rational normal curve. On the other hand, we prove that {for any binary form of rank less than $(d+1)/2$ (subgeneric), any eigenvector increases the rank. Additionally, when the degree is odd, the same holds for generic forms of generic rank.} Our approach employs the strict relation between the apolar action and the Bombieri-Weyl product. 
 
\end{abstract}

\section{Introduction}
For a rank-$r$ matrix, it is well-known that singular pairs can be used to provide a decomposition of the matrix as sum of $r$ rank-one matrices. In the case of higher-order tensors a generalization of such a result is known to be not true in general.

Given a order-$d$ symmetric tensor $F \in \sym^d(\CC^{n+1})$, or degree-$d$ form, a \emph{Waring decomposition} of $F$ is an expression of $F$ as sum of powers of linear forms, i.e., 
\[
    F=\sum_{i=1}^r L_i^d,
\]
where $L_i\in \sym^1(\CC^{n+1})$. The minimal length of such a decomposition is called the \emph{Waring rank} of $F$. 

We say that a linear form $L\in\sym^1(\CC^{n+1})$ is an \emph{eigenvector} of $F\in\sym^d(\CC^{n+1})$ if $\nabla F(L)=\lambda L$, for some $\lambda\in \CC$.  Eigenvectors were introduced in \cite{Lim05,Qi07} as a generalization of singular pairs of matrices to higher order tensors from the perspective of optimization. In the literature, they are also referred to as \emph{critical rank-one approximations}, \emph{E-eigenvectors}, or \emph{singular vector tuples} for non-symmetric tensors.

From a geometric point of view, we consider the \emph{Veronese variety} $V_n^d$ parametrizing powers of linear forms inside the space of degree-$d$ forms. Given $F\in\sym^d(\CC^{n+1})$, a Waring decomposition corresponds to a set of points of the Veronese variety whose linear span contains $F$, while eigenvectors are the critical points of the squared Euclidean distance function from $F$ to the Veronese variety. 

Since computing minimal Waring decompositions is in general NP-hard \cite{HL13}, several approaches have been proposed to study algebraic and geometric properties that have to be satisfied by minimal Waring decompositions. Having in mind the ideal situation of matrices where the concepts of singular tuples and minimal additive decompositions are strictly related, it is very natural to try understanding the relation between eigenvectors of symmetric tensors and their minimal Waring decompositions. 

Interestingly, eigenvectors are typically \textit{not} part of a minimal Waring decomposition, even though sufficiently general symmetric tensors admit a (non-minimal) Waring decomposition made of eigenvectors \cite{DOT18}. In \cite{HT25}, the authors explored the question of when a form has a SVD-like decomposition, namely, when the process of replacing a symmetric tensor with its difference with an eigenvector eventually gets to zero: it was shown that only \textit{weakly-odeco forms} admit such property. The latter question was explored also in \cite{RHST25}. Towards the opposite direction, in \cite{SC10}, the authors showed that subtracting an eigenvector may actually increase the rank. This idea is interesting in view of the lack of knowledge of explicit high-rank forms, see \cite{buczynski2018locus}. Motivated by these previous works, we consider the following general problem.

\begin{problem*}
    Given $F \in \sym^d(\CC^{n+1})$, can its eigenvectors be used to decrease or increase its Waring rank? 
\end{problem*}

Our approach is based on the observation that the \textit{apolar action}, which allows to study Waring decompositions of $F$ via ideals of points contained in its annihilator with respect to derivation, and the \textit{Bombieri-Weyl product}, which is the scalar product used to define eigenvectors of $F$, are proportional.

\subsection*{Outline of the paper and main results}
In \Cref{section: prel} we recall preliminary notions, including the Bombieri-Weyl product and its connection with the apolar action. 

In \Cref{section: apolarity vs eigenscheme}, we investigate the relation between two geometric loci associated to $F$: the \textit{Waring locus} $\W(F)$, i.e., the locus of linear forms that appear in a minimal decomposition of $F$, and the \textit{eigenscheme} $\Eig(F)$, i.e., the locus of all eigenvectors of $F$. In other words, $\Eig(F) \cap \W(F)$ consists of the set of eigenvectors that can be employed to decrease the rank of $F$. In \Cref{lemma:apolar vs eig}, we characterize $\Eig(F) \cap \W(F)$ in terms of apolar ideals. We employ such characterization to give a complete description of $\Eig(F) \cap \W(F)$ for identifiable binary forms (\Cref{prop:binary rank 3} and \Cref{rmk: ann}) and monomials (\Cref{prop: osculante} and \Cref{prop: monomial case}). 

\Cref{section: critical W locus} is devoted to a geometric study of the \textit{critical Waring variety} $\mathrm{WE}^d_{n,r}$, that is the Zariski closure of the set of degree-$d$ rank-$r$ forms in $n+1$ variables such that $\Eig(F) \cap \W(F) \neq \emptyset$, see \Cref{def: critical waring locus}. In the case of binary forms, \Cref{proposition: dim WE rnc ,thm: irriducibilita'WE} show that $\mathrm{WE}_{1,r}^d$ is a codimension one irreducible subvariety of the $r$th secant variety of the rational normal curve. For forms in more variables, in \Cref{teo:dimWEn} we prove that $\mathrm{WE}_{n,r}^d$ has codimension at most $n$ in the $r$th secant variety of the Veronese variety.

\Cref{sec: grow} approaches the opposite direction of when eigenvectors allows to increase the Waring rank. In \Cref{cor:rankgroweig}, we show that for generic forms of subgeneric rank in any degree and for generic forms in odd degree, any eigenvector can be employed to increase the rank of $F$.

\subsection*{Acknowledgement}
The authors are grateful to Giorgio Ottaviani for interesting conversation on the topic. Santarsiero acknowledges the Lie-Størmer Center for supporting her visit to UiT in December 2024, from which this collaboration originated.

Oneto has been partially supported by the Italian Ministry of University and Research in the framework of the Call for Proposals for scrolling of final rankings of the PRIN 2022 call - Protocol no. 2022NBN7TL (“Applied Algebraic Geometry of Tensors”). Oneto is member of INdAM-GNSAGA. Santarsiero was supported by the European Union under NextGenerationEU. PRIN 2022, Prot. 2022E2Z4AK and PRIN 2022 SC-CUP: I53C24002240006.
Turatti was partially supported by Troms{\o} Research Foundation grant 17MATCR and the National Science Center, Poland, under the project “Tensor rank and its applications to signature tensors of paths”, 2023/51/D/ ST1/02363.

\section{Preliminary notions, apolar action and the Bombieri-Weyl product}\label{section: prel}
We work over $\CC$  and we use the standard identification between symmetric tensors in $\sym^d(\CC^{n+1})$ and degree $d$ homogeneous forms in $\CC[x_0,\dots,x_n]_{d}$. We start by introducing the main geometric objects used and we set the notation we will use throughout the whole paper. We refer to \cite{Hartshorne,landsberg2011tensors} as standard references for basics of algebraic geometry and tensor decomposition, respectively.

\subsection{Joins and secant varieties}

\begin{definition}
    Let $X,Y\subset \PP^N$ be projective varieties. The \emph{join} $\mathcal{J}(X,Y)$ of $X$ and $Y$ is the Zariski closure in $\PP^N$ of the union of lines passing through a point of $X$ and a point of $Y$:
    \[
        \mathcal{J}(X,Y)=\overline{\bigcup_{x\in X,y\in Y} \langle x,y \rangle }\subseteq \PP^N.
    \]
    Then, given $X_1,\dots,X_r\subset \PP^N$, we can iteratively define
    \[
        \mathcal{J}(X_1,\dots,X_r)=\mathcal{J}(\mathcal{J}(X_1,\ldots,X_{r-1}),X_r)=\overline{ \{q\in \langle x_1,\dots,x_r \rangle \,|\,  x_i\in X_i\} }.
    \]
\end{definition}

\begin{definition}
    Fix an integer $r\geq 2$. Given a variety $X\subset \PP^N$, set    
    \[
        \sigma_r^\circ(X)=\bigcup_{x_1,\dots,x_r\in X} \langle x_1,\dots,x_r\rangle.
    \]
    The \emph{$r$-th secant variety} $\sigma_r(X)$ of $X$ is the Zariski closure of $\sigma^\circ_r(X)$ in $\PP^N$. Equivalently, the $r$th secant variety is the join of $X$ with itself $r$ times, $\sigma_r(X) = \mathcal{J}(X,\ldots,X)$.
\end{definition}

\begin{remark}
    Recall that, if $X,Y$ are irreducible, then their join is irreducible. In particular, if $X$ is irreducible then all its secant varieties are irreducible. Moreover, by definition, secant varieties of $X$ cannot exceed the linear span of $X$. Therefore, from now on, we will always consider projective varieties that are irreducible and non-degenerate. 
\end{remark}

\subsection{Waring decompositions and Waring loci}
Additive decompositions as sums of powers are geometrically interpreted in terms of \emph{Veronese varieties} and their secant varieties.

\begin{definition}
    Fix integers $n,d$ with $n\geq 1$, $d\geq 2$. The \emph{$d$-th Veronese variety} $V^d_n$ is the image of the embedding $\PP(\sym^1(\CC^{n+1}))\rightarrow \PP(\sym^d(\CC^{n+1})),\ [v]\mapsto [v^d]$. 
    In other words, the Veronese variety $V_n^d$ parametrizes powers of linear forms. 
\end{definition}

\begin{definition}
    A \emph{Waring decomposition} of a form $F\in \sym^d(\CC^{n+1})$ is an expression 
    \[
        F=\sum_{i=1}^r L_i^d, \text{ where }L_i\in \sym^1(\CC^{n+1}).
    \]
    The \emph{Waring rank} of $F$ is the minimum length of a Waring decomposition, and we denote it by $\rk(F)$.
\end{definition}
Geometrically, saying that a form $F$ has rank $r$ is equivalent to say that $[F] \in \langle [L_1^d],\ldots,[L_r^d]\rangle.$ A priori, in the expression of $F$ as linear combination of the $L^d_i$'s there might be coefficients different than $1$: however, since we are working over $\CC$, we can always change representative of the projective point $[L_i^d]$ by bringing the coefficient under the exponent and, without loss of generality, we write $F = \sum_{i=1}^r L_i^d$. Then, it is immediate from the definitions that $\sigma_r^\circ(V^d_n)=\{F\in \PP(\sym^d(\CC^{n+1}))\, |\, \rk(F)\leq r \}$. 

In \cite{carlini2017waring}, it was introduced the locus of all points appearing in a minimal Waring decomposition.
\begin{definition}
Let $F\in \sym^d(\CC^{n+1})$. The \emph{Waring locus} $\W(F)$ of $F$ is
\begin{align*}
    \mathcal W(F)=\left\{[L]\in \PP(\sym^1(\CC^{n+1}))\mid\ \Large{\substack{F\in \langle L^d,L_2^d,\dots, L_r^d\rangle,\,  \rk(F)=r,\\
     [L_i]\in \PP(\sym^1(\CC^{n+1})),\ i=2,\dots,r}}\right\}. 
\end{align*}
Its complement, denoted by $\mathcal F(F)$, is called the \emph{forbidden locus} of $F$.
\end{definition}
Recall that $\W(F)$ could be neither a closed set nor an open set, hence the terminology ``locus''. The special case corresponds to when a form admits a unique decomposition, up to permutation.

\begin{definition}
A rank-$r$ form $F=\sum_{i=1}^rL_i^d\in \sym^d(\CC^{n+1})$ is called \emph{identifiable} if $\W(F)=\{[L_1],\dots,[L_r]\}$.    
\end{definition}

\subsection{Apolar action and Apolarity Lemma}
Apolarity theory offers a solid algebraic framework in which Waring decompositions of a given form $F$ can be studied in terms of ideals of points contained in the annihilator of $F$ with respect to derivation. For a complete overview, we refer to \cite{IarrobinoKanev}.

We identify $\sym^d(\CC^{n+1})=\CC[x_0,\dots,x_n]_d$ and $(\sym^d(\CC^{n+1}))^\ast=\CC[\partial_0,\dots,\partial_n]_d$. We consider the \emph{apolar action} via differentiation
\[
    G\circ F=G(\partial_0,\dots,\partial_n) F(x_0,\dots,x_n),\ F\in \sym^d(\CC^{n+1}),\ G\in \sym^d(\CC^{n+1})^*.
\]
Then $\CC[x_0,\dots,x_n]$ 
has the structure of a $\CC[\partial_0,\dots,\partial_n]$-module 
via differentiation.
\begin{definition}
    Let $F\in \sym^d(\CC^{n+1})$, the \emph{apolar ideal} of $F$ is $$
    \mathrm{Ann}(F)=\{G \in \CC[\partial_0,\dots,\partial_n]\mid G \circ F=0\}.
    $$
\end{definition}
The apolar action and the apolar ideal are one of the main tools to compute Waring decompositions due to the apolarity lemma, see for instance \cite[Lemma 1.15]{IarrobinoKanev}.

\begin{lemma}[Apolarity Lemma]
     Let $F\in \sym^d(\CC^{n+1})$.  Let $Z=\{P_1,\dots, P_r\}\subset \PP^n$ be a set of points with $P_i=[(p_{i_0}:\dots:p_{i_n})]$. Let $L_i=p_{i_0}x_0+\dots+p_{i_n}x_n$, for $i=1,\dots,r$, the corresponding linear forms. Then, $I_Z\subset \mathrm{Ann}(F)$ if and only if $F\in \langle L_1^d,\dots,L_r^d\rangle$.
\end{lemma}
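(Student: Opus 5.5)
The plan is to reduce both directions to a single pairing identity between the polynomial ring and the ring of differential operators. This identity evaluates a form of degree $e$ at a point: for $G\in\CC[\partial_0,\dots,\partial_n]_e$ and $L=p_0x_0+\dots+p_nx_n$,
\[
G\circ L^d=\frac{d!}{(d-e)!}\,G(p_0,\dots,p_n)\,L^{d-e}\quad (e\le d),
\]
while $G\circ L^d=0$ for $e>d$. To prove it, I first check it on monomials $\partial^\alpha$ by iterating $\partial_j L^k=k\,p_jL^{k-1}$, and then extend by linearity. In particular, $G\circ L^d=0$ for $e\le d$ if and only if $G(P)=0$, where $P=[(p_0:\dots:p_n)]$.

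\emph{Direction ``$\Leftarrow$''.} Suppose $F=\sum_i c_iL_i^d$. Let $G\in I_Z$ be homogeneous of degree $e$. If $e\le d$, then $G\circ L_i^d$ is a multiple of $G(P_i)=0$, so $G\circ F=0$. If $e>d$, then $G\circ F=0$ trivially. Since $I_Z$ is a homogeneous ideal, it follows that $I_Z\subset\mathrm{Ann}(F)$.

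\emph{Direction ``$\Rightarrow$''.} I will use duality in degree $d$. The pairing $\CC[\partial]_d\times\sym^d(\CC^{n+1})\to\CC$, $(G,F)\mapsto G\circ F$, is perfect, because monomials pair diagonally: $\partial^\alpha\circ x^\beta=\alpha!\,\delta_{\alpha\beta}$.

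Let $W=\langle L_1^d,\dots,L_r^d\rangle$. By the identity above, its orthogonal in degree $d$ is
\[
W^\perp=\{G\in\CC[\partial]_d : G(P_i)=0 \text{ for all } i\}=(I_Z)_d .
\]
If $I_Z\subset\mathrm{Ann}(F)$, then in particular $(I_Z)_d\subset\mathrm{Ann}(F)_d$. Hence $F$ is orthogonal to $W^\perp$, and by perfectness $F\in(W^\perp)^\perp=W$.

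Finally, the coefficients $c_i$ can be absorbed as $d$-th roots over $\CC$, as the paper already notes, so $F\in\langle L_1^d,\dots,L_r^d\rangle$ is the same as a decomposition of the required shape.

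The only real point needing care is the perfect-pairing step. There I must use that double orthogonals recover the subspace, which holds because the spaces are finite-dimensional and the pairing is nondegenerate. I also only need the degree-$d$ part of the ideal, which is why no saturation issues arise. The rest is routine bookkeeping.
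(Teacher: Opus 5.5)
Your proof is correct. The evaluation identity $G\circ L^d=\frac{d!}{(d-e)!}\,G(p)\,L^{d-e}$ gives the easy direction and the equality $(I_Z)_d=\langle L_1^d,\dots,L_r^d\rangle^\perp$, and perfectness of the degree-$d$ apolar pairing then gives the converse. The paper does not reprove this lemma; it cites \cite[Lemma 1.15]{IarrobinoKanev}, whose argument is this same duality (and the pairing you use is, by the paper's next lemma, $d!$ times the Bombieri--Weyl product).
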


With a little abuse of notation, for $F,G\in\CC[x_0,\ldots,x_n]$, we will write $G \in {\rm Ann}(F)$ when $G(\partial)\circ F = 0$ through the substitution $x_i \mapsto \partial_i$. 

\subsection{Bombieri-Weyl product and the eigenscheme}
Let $\langle \, \cdot \, , \, \cdot \,  \rangle_\RR:\RR^{n+1}\times \RR^{n+1}\to \RR$ be a non-degenerate bilinear form in $\RR^{n+1}$, and consider its extension to the complex numbers as a function $\langle \, \cdot\, ,\, \cdot \,  \rangle_\CC:\CC^{n+1}\times\CC^{n+1}\to \CC$. Notice that this means that there exist vectors $v\in\CC^{n+1}\setminus \{ 0\}$ such that $\langle v,v\rangle_\CC=0$, those are called \emph{isotropic vectors}. The \emph{Bombieri-Weyl product} $\langle \, \cdot \,,\, \cdot \,\rangle $ in $\sym^d(\CC^{n+1})$ is defined on rank-one forms as
\[
    \langle u^d,v^d\rangle=(\langle u,v\rangle_\CC)^d
\]
and extended by linearity to $\sym^d(\CC^{n+1})$ as every polynomial can be seen as a linear combination of $d$th-powers of linear forms. 

The next result is a standard fact connecting the Bombieri-Weyl product for which variables are orthonormal and the apolarity action. For the sake of completeness, we formalise it in the next lemma as it will be central in connecting Waring decompositions and the eigenscheme. 

\begin{notation}
    We write $x^\alpha := \prod_{i=0}^n x_i^{\alpha_i} \in\sym^{|\alpha|}(\CC^{n+1})$, where $\alpha=(\alpha_0,\dots,\alpha_n)$ and $|\alpha|=\sum_{i=0}^n\alpha_i$.
\end{notation}

\begin{lemma}
    Let $F=\sum_{|\alpha|=d}F_\alpha x^\alpha,G=\sum_{|\alpha|=d}G_\alpha x^\alpha\in \sym^d(\CC^{n+1})$. Then, $$
    G(\partial)\circ F=d!\langle F,G\rangle,
    $$ or, in other words, the Bombieri-Weyl product for which variables are orthonormal is, up to a scalar, the apolar action restricted to $\sym^d(\CC^{n+1})\times (\sym^d(\CC^{n+1}))^\ast\to \CC$.
\end{lemma}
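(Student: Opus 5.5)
The plan is to reduce everything to powers of linear forms, where both sides can be computed directly, and then extend by bilinearity.

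First, I would note that both sides are bilinear in $(F,G)$. The apolar pairing $(F,G)\mapsto G(\partial)\circ F$ on $\sym^d(\CC^{n+1})\times\sym^d(\CC^{n+1})$ is manifestly bilinear, since differentiation is linear and $G\mapsto G(\partial)$ is linear. The Bombieri-Weyl product is, by definition, the bilinear extension of its values on $d$-th powers. Because $d$-th powers of linear forms span $\sym^d(\CC^{n+1})$, it therefore suffices to check the identity for $F=u^d$ and $G=v^d$, where $u=\sum_i u_ix_i$ and $v=\sum_i v_ix_i$.

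For this computation, I would use that $v(\partial)=\sum_i v_i\partial_i$ is a derivation, which gives
\[
v(\partial)\circ u^k=k\,u^{k-1}\,v(\partial)\circ u = k\Big(\sum_i u_iv_i\Big)u^{k-1}=k\langle u,v\rangle_\CC\, u^{k-1}.
\]
This uses that the variables are orthonormal, so $\langle u,v\rangle_\CC=\sum_i u_iv_i$. Iterating $d$ times then yields
\[
v^d(\partial)\circ u^d=d!\,\langle u,v\rangle_\CC^d=d!\,\langle u^d,v^d\rangle.
\]

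There is one subtle point, and it is the main thing to address. Defining the Bombieri-Weyl product ``on rank-one forms and extended by linearity'' only makes sense if the result is well defined, because a form has many expressions as a combination of powers. The computation above actually settles this: the bilinear form $\frac{1}{d!}G(\partial)\circ F$ is well defined and agrees with $\langle u,v\rangle_\CC^d$ on pairs of powers. Hence it is the (unique) bilinear extension, and the claimed identity follows.

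As a sanity check, I would also record the coordinate form. Taking $F=x^\alpha$ and $G=x^\beta$, one gets $\partial^\beta\circ x^\alpha=\delta_{\alpha\beta}\,\alpha!$. Hence
\[
G(\partial)\circ F=\sum_\alpha \alpha!\,F_\alpha G_\alpha,
\qquad
\langle x^\alpha,x^\beta\rangle=\delta_{\alpha\beta}\,\frac{\alpha!}{d!},
\]
which is the familiar monomial description of the Bombieri-Weyl product.
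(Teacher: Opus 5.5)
Your argument is correct, and it takes a different route from the paper. The paper works entirely in the monomial basis. It quotes the coordinate formula $\langle F,G\rangle=\sum_{|\alpha|=d}\binom{d}{\alpha}^{-1}F_\alpha G_\alpha$ from Reznick, computes $G(\partial)\circ F=\sum_{|\alpha|=d}\alpha!\,F_\alpha G_\alpha$, and concludes from $\alpha!=d!\binom{d}{\alpha}^{-1}$.

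You instead check the identity on pairs of powers $u^d,v^d$. There the derivation rule gives $v^d(\partial)\circ u^d=d!\,\langle u,v\rangle_\CC^d$ directly from the definition of the Bombieri-Weyl product. You then extend by bilinearity, using that $d$-th powers span $\sym^d(\CC^{n+1})$.

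Your route is self-contained. You never need the monomial formula as an input; it comes out at the end as your ``sanity check'', which is essentially the paper's computation. You also show that the ``extension by linearity'' in the paper's definition is well defined, a point the paper leaves implicit.

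The paper's route is shorter given the citation, and it displays both pairings as explicit diagonal sums in coordinates. However, it relies on an external formula whose verification amounts to expanding $\langle u,v\rangle_\CC^d$ by the multinomial theorem. That is the same content your derivation-based computation supplies directly.
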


\begin{proof} On the one hand, one easily computes
$$\langle F,G \rangle=\sum_{|\alpha|=d} \binom{d}{\alpha}^{-1}F_\alpha G_\alpha,$$
see for instance \cite[Equation 1.4]{Reznick}. On the other hand 
\begin{equation*}
G(\partial) \circ F=\left(\sum_{|\alpha|=d} G_\alpha \partial^\alpha\right)\left(\sum_{|\alpha|=d} F_\alpha x^\alpha\right)=\sum_{|\alpha|=d} \alpha!F_\alpha G_\alpha.\qedhere
\end{equation*}   
\end{proof}

\begin{definition}
    Let $F\in \sym^d(\CC^{n+1})$, we say that $L$ is a \emph{eigenvector} of $F$ with \emph{singular value} $\lambda\in\CC$ if $$0 = \langle F-\lambda L^d,L^{d-1}M\rangle= L^{d-1}M\circ(F-\lambda L^d),\ \text{for every $M\in\sym^1(\CC^{n+1})$}.$$  
\end{definition}

\begin{remark}\label{rmk:tangent}
    If we recall that the tangent space to the Veronese variety $V_n^d$ at a point $[L^d]$ is $T_{[L^d]}V_n^d = \{[L^{d-1}M] \mid M \in \sym^d(\CC^{n+1})\}$, then we can say that $L$ is an eigenvector of $F$ if the line $\langle [F],[L^d] \rangle$ is orthogonal to the tangent space to the Veronese variety at $[L^d]$ with respect to the Bombieri-Weyl product. 
\end{remark}

Notice that $L$ being an eigenvector of $F$ is equivalent to $\nabla(F(L))=\lambda' L$, where by $F(L)$ we denote the evaluation of $F$ in the point corresponding to $L$. Indeed, we have $
 L^{d-1}M\circ(F-\lambda L^d)=0
$ for all $M\in \CC^{n+1}$, if and only if $ L^{d-1}\circ(F-\lambda L^d)=0$, which is equivalent to $\nabla F(L)=\nabla L^d(L)=\lambda' L$, where $\lambda'=\frac{\lambda}{d} L^{d-1}(\partial)\circ L^{d-1}.$

\begin{definition}\label{def:eigenscheme}
    The \emph{eigenscheme} of $F\in \sym^d(\CC^{n+1})$, denoted by $\mathrm{Eig}(F)$, is the scheme in $\PP(\sym^1(\CC^{n+1}))$ defined by 
    \begin{align}\label{eq: defining ideal eigenscheme}
    \mathrm{minors}_{2\times2}\begin{bmatrix}
        \nabla F(x)\\
    x
    \end{bmatrix}=0.
    \end{align}
    It consists of all linear forms that are eigenvectors of $F$.
\end{definition}

\begin{remark}
We remark that if $L$ is isotropic and $L$ is an eigenvector of $F$, then its singular value is not unique: indeed, in such case we have $L^{d-1}\circ (F-\lambda L^{d-1})= L^{d-1}\circ F=0$, since $L\circ L=0$, thus the condition is independent of $\lambda\in\CC$. On the other hand, if $L$ is not isotropic, then its singular value is uniquely determined, see \cite[Proposition 2.6]{DOT18}.  
\end{remark}

It is also  worth noticing that, unlike matrices, the value of $\lambda$ depends on the scaling of $L$. Indeed, if $L^d$ is an eigenvector of $F$ with singular value $\lambda$, then $\alpha L^d$ has singular value $\lambda / \alpha$ {for any $\alpha\neq0$}.  To mitigate this issue, when $L$ is not isotropic and the singular value is the object of study, it is often assumed that $\langle L,L\rangle_\CC=1$ to avoid ambiguity. In our case, as we are interested in the eigenscheme, $L$ is not assumed to have norm one.

\medskip
Before moving to our results, we recall a standard working assumption.

\begin{remark}    
Let $F\in \sym^d(\CC^{n+1})$. We say that $F$ has $s$ \emph{essential variables} if there exists a vector subspace $V\subseteq \sym^1(\CC^{n+1})$ with $\dim V=s$ such that $F\in \sym^d(V)\subseteq \sym^d (\CC^{n+1})$. It is well-known that minimal Waring decompositions of $F$ involve only linear forms in its essential variables, in other words, we have that $\W(F)\subseteq \PP(V)$, e.g., see \cite[Remark 2.4]{carlini2017waring}. On the other hand, if $s<n+1$, then $\Eig(F)\not\subset \PP(V)$ since $\PP(V^\perp)\subset \Eig(F)$. 
However, the critical points of $\PP(V^\perp)$ are generally not so interesting as is evident for instance in the case $d=2$ where the singular pairs associated with zero singular values 
of a non-maximal rank matrix provide no information on lower rank approximations of the matrix. 
\end{remark}

\section{Apolar Ideal and Eigenscheme}\label{section: apolarity vs eigenscheme}
We first study when eigenvectors can be employed to decrease the rank. In other words, given a form $F$, we want to understand the intersection $\W(F) \cap \Eig(F)$. In this section, we establish the basic connections between the Waring locus and the eigenscheme of a form. 

\begin{notation}
 Given a vector subspace $W\subset \sym^d(\CC^{n+1})$, we write $\langle F,W \rangle=0$ to mean $\langle F,G\rangle=0 ~\forall G\in W$.
\end{notation}

\begin{lemma}\label{lemma:apolar vs eig}
Let $n\geq 1$, $d\geq 2$, and let $F\in \sym^d(\CC^{n+1})$. Then
 $[L]\in \Eig(F)$ if and only if there exists $\mu \in \CC$ such that $L^{d-1}\in \mathrm{Ann}(F-\mu L^d)$. In particular, $\mu$ is the singular value of $L$. 

\end{lemma}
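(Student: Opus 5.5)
The plan is to unwind the definition of eigenvector and translate the Bombieri-Weyl pairing into the apolar action, using the lemma identifying $G(\partial)\circ F$ with $d!\langle F,G\rangle$ in degree $d$.

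By definition, $L$ is an eigenvector of $F$ with singular value $\mu$ if and only if
\[
L^{d-1}M\circ(F-\mu L^d)=0 \quad \text{for every } M\in\sym^1(\CC^{n+1}).
\]
Set $H:=L^{d-1}\circ(F-\mu L^d)$. Since the action is a module action, $L^{d-1}M\circ(F-\mu L^d)=M\circ H$. Here $H$ is a linear form, and $M\circ H$ is just the pairing of two linear forms (the degree-one case of the lemma above).

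Because the degree-one pairing is non-degenerate, $M\circ H=0$ for all $M$ holds if and only if $H=0$. The condition $H=0$ says exactly that $L^{d-1}\in\mathrm{Ann}(F-\mu L^d)$.

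This gives both directions at once.
\begin{itemize}
\item[($\Rightarrow$)] If $[L]\in\Eig(F)$, take $\mu$ to be a singular value of $L$. The definition yields the vanishing for all $M$, hence $H=0$.
\item[($\Leftarrow$)] If $L^{d-1}\in\mathrm{Ann}(F-\mu L^d)$ for some $\mu$, then $H=0$. Hence every $L^{d-1}M$ annihilates $F-\mu L^d$, so $L$ is an eigenvector with singular value $\mu$.
\end{itemize}
In particular, the same $\mu$ witnesses both conditions, which is the final sentence of the statement.

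The only subtlety is reconciling this with the description of $\Eig(F)$ as the scheme cut out by the $2\times 2$ minors of $\begin{bmatrix}\nabla F(x)\\ x\end{bmatrix}$. The paper's remark after the definition already notes that the condition $L^{d-1}\circ(F-\lambda L^d)=0$ is equivalent to $\nabla F(L)=\lambda' L$.

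To make this explicit, note that $L^{d-1}\circ F$ equals $(d-1)!$ times the linear form $\sum_i \partial_iF(\ell)\,x_i$, up to the fixed factor $1/d$, where $\ell$ is the coefficient vector of $L$. Likewise, $L^{d-1}\circ L^d$ is a nonzero multiple of $\langle L,L\rangle_\CC^{\,d-1}L$.

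If $L$ is non-isotropic, the identity $L^{d-1}\circ F=\mu\, L^{d-1}\circ L^d$ is therefore equivalent to $\nabla F(\ell)$ being proportional to $\ell$, and $\mu$ is determined.

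If $L$ is isotropic, then $L^{d-1}\circ L^d=0$. The condition becomes $\nabla F(\ell)=0$, which is consistent with the vanishing of the minors (taking proportionality factor $0$), and $\mu$ is arbitrary, matching the earlier remark.

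This isotropic case is the main point to handle carefully, but it is only a check of the scalar bookkeeping.
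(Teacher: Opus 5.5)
Your proof is correct and is essentially the paper's argument: both unwind the Bombieri--Weyl definition of eigenvector and observe that vanishing against every $L^{d-1}M$ is equivalent to $L^{d-1}\circ(F-\mu L^d)=0$. You additionally make explicit the nondegeneracy of the degree-one pairing, where the paper just writes ``equivalently.''

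One caution about your closing reconciliation with the $2\times 2$ minors: in the isotropic case it only runs one way. An isotropic $\ell$ with $\nabla F(\ell)=\lambda\ell$, $\lambda\neq 0$, lies in the minors scheme yet admits no such $\mu$. For example, take $F=x_0^2+x_1^2$ and $L=x_0+ix_1$; then $L\circ(F-\mu L^2)=2L\neq 0$ for every $\mu$. So the lemma must be read with $\Eig(F)$ meaning eigenvectors in the Bombieri--Weyl sense, which is exactly what the paper's proof tacitly does.
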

\begin{proof}
Suppose $L$ is an eigenvector of $F$. Then, as commented in \Cref{rmk:tangent}, this is equivalent to the existence of $\mu\in \CC$ such that $\langle F-\mu L^d,T_{[L^d]}V_n^d\rangle=0$, that is, $\langle F-\mu L^d,ML^{d-1}\rangle=0$ for all $M\in \sym^1(\CC^{n+1})$. Equivalently, $L^{d-1}\circ (F-\mu L^d)=0$, namely
$L^{d-1}\in \mathrm{Ann}(F-\mu L^d)$.
\end{proof}

\begin{remark}\label{rmk:singularvalue=coefficient}
 Notice that, if $[L]\in \W(F)$, letting $F=\alpha L^d+\sum_{i=2}^rL_i^d$ with $r = \rk(F)$, then by \Cref{lemma:apolar vs eig} $L$ is also an eigenvector if and only if $L^{d-1}\in\mathrm{Ann}((\alpha-\mu)L^d+\sum_{i=2}^rL_i^d)$. In particular, it has singular value $\mu=\alpha$ if and only if $L^{d-1}\in \mathrm{Ann}(\sum_{i=2}^rL_i^d)$, i.e., $L$ is an eigenvector of $\sum_{i=2}^rL_i^d$ with singular value zero. 

\end{remark}

The case of forms of rank $r$ in $r$ essential variables is special as the linear forms appearing in the Waring decomposition are linearly independent. In this case, the fact that a summand of a minimal Waring decomposition is also an eigenvector is equivalent to the orthogonality with the other summands.

\begin{proposition}\label{prop:essential_var}
    Let $F$ with $r$ essential variables and $\rk(F)=r$, then $[L] \in \mathcal W(F)\cap \mathrm{Eig}(F)$ if and only if $F = \alpha L^d + \sum_{i=2}^r \beta_iL_i^d$ with $\langle L,L_i\rangle_\CC=0$, for all $i = 2,\ldots,r$, and either $\alpha$ is the singular value of $L$ as eigenvector of $F$ or $L$ is isotropic.
\end{proposition}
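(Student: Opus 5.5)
We will reduce everything to \Cref{lemma:apolar vs eig}, using the fact that in $r$ essential variables a rank-$r$ decomposition $F=\alpha L^d+\sum_{i=2}^r\beta_iL_i^d$ involves linearly independent forms $L,L_2,\dots,L_r$. By the remark on essential variables we may assume $n+1=r$. Then $L,L_2,\dots,L_r$ is a basis of $\sym^1(\CC^{r})$, and $L^d,L_2^d,\dots,L_r^d$ are linearly independent in $\sym^d$, since $d\ge 2$ and powers of distinct independent linear forms are independent.

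First, the key identity. For linear forms $A,B$ we have $A^{d-1}(\partial)\circ B^d=\tfrac{d!}{1!}\langle A,B\rangle_\CC^{d-1}B$, up to the normalization of the earlier lemma relating apolarity and the Bombieri--Weyl product. Applying this to any decomposition as above with $[L]\in\W(F)$ gives
\[
L^{d-1}\circ(F-\mu L^d)=c\Big((\alpha-\mu)\langle L,L\rangle_\CC^{d-1}L+\sum_{i=2}^r\beta_i\langle L,L_i\rangle_\CC^{d-1}L_i\Big),
\]
with $c=d!\neq0$.

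For the forward direction, suppose $[L]\in\W(F)\cap\Eig(F)$. Choose a minimal decomposition containing $L$; it has the form above. By \Cref{lemma:apolar vs eig} there is $\mu$ for which the displayed expression vanishes. Since $L,L_2,\dots,L_r$ are linearly independent, every coefficient must vanish. The $\beta_i$ are nonzero, since otherwise the rank would drop. Hence $\langle L,L_i\rangle_\CC^{d-1}=0$, that is, $\langle L,L_i\rangle_\CC=0$ for $i\ge2$. The remaining coefficient gives $(\alpha-\mu)\langle L,L\rangle_\CC^{d-1}=0$. So either $L$ is isotropic or $\mu=\alpha$, and in the latter case the singular value is $\alpha$, since it is unique when $L$ is non-isotropic (\cite[Proposition 2.6]{DOT18}). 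This is consistent with \Cref{rmk:singularvalue=coefficient}.

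For the converse, suppose $F=\alpha L^d+\sum\beta_iL_i^d$ with the stated orthogonality relations. Since $\rk F=r$, this is a minimal decomposition, so $[L]\in\W(F)$. Taking $\mu=\alpha$, the displayed expression vanishes; when $L$ is isotropic, any $\mu$ works. By \Cref{lemma:apolar vs eig}, $[L]\in\Eig(F)$.

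The main point to check carefully is the identity $A^{d-1}\circ B^d\propto\langle A,B\rangle^{d-1}B$. This requires the bilinear form $\langle\cdot,\cdot\rangle_\CC$ to coincide with the one induced by apolarity in degree one, i.e.\ variables orthonormal, which is the standing convention. A second point needing justification is the linear independence argument, namely that $L_i$ are linearly independent because $F$ has $r$ essential variables and rank $r$. This holds because the span of the $L_i$ must contain the essential-variable space $V$ of dimension $r$.
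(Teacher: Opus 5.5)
Your argument is the paper's: you expand $L^{d-1}\circ(F-\mu L^d)$ using $L^{d-1}\circ M^d=d!\,\langle L,M\rangle_\CC^{d-1}M$ and read off coefficients via the linear independence of $L,L_2,\dots,L_r$. Your remark that $\beta_i\neq0$ is needed here is a detail the paper leaves implicit. You should drop the opening reduction to $n+1=r$, which is not justified when the bilinear form is degenerate on $V$ (e.g.\ $V=\langle x_0+ix_1\rangle$). Nothing in your proof depends on it, since you only use linear independence, and you justify that correctly via $V\subseteq\langle L,L_2,\dots,L_r\rangle$.
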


\begin{proof}
    By definition, $[L] \in \W(F)$ if and only if $F \in \langle L^d,L_2^d,\ldots,L_r^d\rangle$, say $F = \alpha L^d + \sum_{i=2}^r \beta_iL_i^d$. At the same time, $[L] \in \Eig(F)$ if and only if there exists $\mu \in \CC$ such that
    \begin{equation}\label{eq:3_1}
        L^{d-1}\circ\left((\alpha-\mu) L^d+\sum_{i=2}^r\beta_iL_i^d\right)=d!(\alpha-\mu)\langle L,L\rangle_\CC^{d-1}L+d!\sum_{i=2}^r\beta_i\langle L,L_i\rangle_\CC^{d-1}L_i=0.
    \end{equation}
    Since $F$ has $r$ essential variables, $\{L,L_2,\ldots,L_r\}$ are linearly independent. Thus, \cref{eq:3_1} is equivalent to have that $\langle L,L_i \rangle_\CC = 0$, for all $i = 2,\ldots,r$, and either $\langle L,L \rangle_\CC = 0$, i.e., $L$ is isotropic, or $\alpha = \mu$. {Recall that, if $L$ is isotropic, any $\mu \in \CC$ is a valid eigenvalue for $L$, see \Cref{rmk:singularvalue=coefficient}.}
\end{proof}
\begin{example}
    \begin{enumerate}
        \item Let $F=(x_0+ix_1)^d+x_2^d+(x_2+x_3)^d\in \sym^d\CC^{4}$. Notice $F$ has three essential variables and rank three. Moreover, $x_2$ and $(x_2+x_3)$ are not eigenvectors, and $x_0+ix_1$ is an isotropic eigenvector, in particular any $\mu\in\CC$ is a valid singular value.
        \item $F=\alpha x_0^d+x_1^d+(x_1+x_2)^d\in \sym^d\CC^{3}$ has three essential variables and rank three. Moreover, $x_1$ and $(x_1+x_2)$ are not eigenvectors, and $x_0$ is an eigenvector with singular value $\alpha$.

    \end{enumerate}
    \end{example}

\subsection{Binary forms}
The apolar ideal of any binary form $F \in \sym^d(\CC^2)$ is generated by two forms, ${\rm Ann}(F) = (G_1,G_2)$ with $\deg(G_1)+\deg(G_2)=d+2$, see \cite[Theorem 1.44]{IarrobinoKanev}. By Apolarity Lemma, it follows that, assuming $\deg(G_1) \leq \deg(G_2)$, then $\rk(F) = \deg(G_1)$ if $G_1$ is squarefree and $\rk(F) = \deg(G_2)$ otherwise. This is a reformulation of a classical result by Sylvester on binary forms. This description of the apolar ideal allows us to obtain a more precise characterization of when a linear form is both in the Waring locus and in the eigenscheme of a binary form.

\begin{notation}
    Let $[L] \in\PP \sym^1\CC^2$, we denote by $[L^\perp]\in\PP\sym^1\CC^2$ the unique linear form (up to scalar) such that $L^\perp\circ L = \langle L^\perp,L \rangle_\CC=0$.
    Notice that if $L$ is isotropic, which for binary forms means $[L]\in\{[x_0+ix_1],[x_0-ix_1]\}$, then $[L^\perp]=[L]$.
\end{notation}

\begin{proposition}\label{prop:binary rank 3}
    Let $F \in \sym^d\CC^2$ with $\rk(F)=3$. If $[L] \in \W(F) \cap \Eig(F)$, $L$ appears with coefficient $\alpha$ in a minimal Waring decomposition of $F$ and $\mu$ is the singular value of $L$ as eigenvector of $F$, then, $\mu \neq \alpha$ and $L$ is not isotropic. Moreover, if $d \geq 4$, $[L] \in \W(F) \cap \Eig(F)$ implies that $\mathrm{Ann}(F-\mu L^d)=(L^\perp L_2^\perp L_3^\perp,L^{d-1})$.
\end{proposition}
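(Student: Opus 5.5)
Write $F=\alpha L^d+L_2^d+L_3^d$ as a minimal decomposition containing $L$; the three linear forms are pairwise non-proportional. The plan is to feed this into \Cref{lemma:apolar vs eig}, which gives $L^{d-1}\circ\big((\alpha-\mu)L^d+L_2^d+L_3^d\big)=0$. Computing as in \cref{eq:3_1}, this reads
\[
(\alpha-\mu)\langle L,L\rangle_\CC^{d-1}L+\langle L,L_2\rangle_\CC^{d-1}L_2+\langle L,L_3\rangle_\CC^{d-1}L_3=0
\]
in $\sym^1(\CC^2)$.

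First we rule out $\mu=\alpha$ and $L$ isotropic. In either case the first term vanishes. Since $L_2,L_3$ are linearly independent, we then get $\langle L,L_2\rangle_\CC=\langle L,L_3\rangle_\CC=0$, so $[L_2]=[L^\perp]=[L_3]$, contradicting distinctness. (If $L$ is isotropic, $[L^\perp]=[L]$, which even contradicts distinctness from $L$.) So $\mu\neq\alpha$ and $L$ is not isotropic. Incidentally, the relation shows that $\langle L,L_2\rangle_\CC$ and $\langle L,L_3\rangle_\CC$ must both be nonzero; otherwise two of the three independent-in-pairs forms would give a trivial dependence.

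For the apolar ideal, set $G=F-\mu L^d=(\alpha-\mu)L^d+L_2^d+L_3^d$, with $\alpha-\mu\neq 0$. The product $L^\perp L_2^\perp L_3^\perp$ is a cubic vanishing on the three points $[L],[L_2],[L_3]$. By the Apolarity Lemma it annihilates $G$, because the ideal of these three points is principal and generated by this cubic. Since the cubic is squarefree and $d\ge 4$, Sylvester's description gives $\rk(G)\le 3$.

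We claim $\rk(G)=3$. Suppose instead that $\mathrm{Ann}(G)$ had a generator of degree $\le 2$. Then $\mathrm{Ann}(G)=(G_1,G_2)$ with $\deg G_1\le 2$ and $\deg G_2\ge d\ge 4$, so $\mathrm{Ann}(G)_3$ would consist of the multiples of $G_1$. Our squarefree cubic would then be a multiple of $G_1$, so $G_1$ would divide a product of three distinct linear forms and $G_1$ would be squarefree. Hence $\rk(G)\le 2$, with decomposition supported on a subset of $\{L,L_2,L_3\}$. Powers $L^d,L_2^d,L_3^d$ of three distinct linear forms are independent when $d\ge 2$, so this would force a zero coefficient, which is impossible. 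Thus $\mathrm{Ann}(G)=(G_1,G_2)$ with $\deg G_1=3$, $G_1=L^\perp L_2^\perp L_3^\perp$ up to scalar, and $\deg G_2=d+2-3=d-1$.

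It remains to show that $L^{d-1}$ can serve as the second generator. By \Cref{lemma:apolar vs eig}, $L^{d-1}\in\mathrm{Ann}(G)_{d-1}$. The main point is that $L^{d-1}\notin (G_1)$: the cubic $G_1$ has three distinct linear factors, while $L^{d-1}$ has only the single factor $L$. So $L^{d-1}$ is not a multiple of $G_1$. Since $\mathrm{Ann}(G)_{d-1}=(G_1)_{d-1}\oplus\langle G_2\rangle$, we may replace $G_2$ by $L^{d-1}$, and $d\ge 4$ ensures $d-1\ge 3$, so the degrees are consistent. This yields $\mathrm{Ann}(F-\mu L^d)=(L^\perp L_2^\perp L_3^\perp,L^{d-1})$.

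I expect the main obstacle to be the degree bookkeeping that excludes a lower-degree generator; the Sylvester structure theorem handles it as above.
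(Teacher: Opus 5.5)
Your proof is correct and follows the paper's argument. The second half is the same: the squarefree cubic $L^\perp L_2^\perp L_3^\perp$ is the low-degree generator and cannot divide $L^{d-1}$, so $L^{d-1}$ serves as the degree-$(d-1)$ generator. You also justify that $F-\mu L^d$ has rank exactly $3$, which the paper only asserts. The one small variation is that you exclude $\mu=\alpha$ with the linear relation \cref{eq:3_1}, which the paper uses only for the isotropic case, instead of via the apolar ideal $(L_2^\perp L_3^\perp,H)$ of the rank-two form $\beta_2L_2^d+\beta_3L_3^d$; this lets you handle both cases at once.
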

\begin{proof}
    Let $[L]\in\W(F) \cap \Eig(F)$, then, $F = \alpha L^d + \beta_2L_2^d + \beta_3L_3^d$. Assume, by contradiction, that $\mu = \alpha$. In this case, $F - \mu L^d = \beta_2L_2^d + \beta_3L_3^d$ has rank equal to $2$ and ${\rm Ann}(F-\mu L^d) = (L_2^\perp L_3^\perp,H)$ with $\deg(H) = d$. By \Cref{lemma:apolar vs eig}, $L^{d-1} \in {\rm Ann}(F-\mu L^d)$, thus $L_2^\perp L_3^\perp \mid L^{d-1}$: a contradiction, since $L_2$ and $L_3$ are not proportional. 
    
    Note that $L$ cannot be isotropic. Otherwise, \cref{eq:3_1} simplifies to $\beta_2\langle L,L_2 \rangle_\CC^{d-1}L_2 + \beta_3\langle L,L_3 \rangle_\CC^{d-1}L_3 = 0$. Since $L_2$ and $L_3$ are not proportional and $\beta_2\beta_3 \neq 0$, it follows $\langle L,L_2 \rangle_\CC = \langle L,L_3 \rangle_\CC = 0$: a contradiction, since we are working with binary forms. 

    From the first part of the proof, if $[L] \in \W(F) \cap \Eig(F)$ and $\mu$ is the singular value of $L$, then $F-\mu L^d$ has rank equal to $3$: thus, since $d \geq 4$, a generator of ${\rm Ann}(F-\mu L^d)$ of smallest degree must be degree-$3$, square-free and can be chosen to be divisible by $L^\perp$, say $L^\perp L_2^\perp L_3^\perp$ and $F \in \langle L^d,L_2^d,L_3^d\rangle$. At the same time, by \Cref{lemma:apolar vs eig}, $L^{d-1} \in {\rm Ann}(F-\mu L^d)$. Since $L^\perp L_2^\perp L_3^\perp$ is square-free, it cannot divide $L^{d-1}$ and, since $\deg(L^\perp L_2^\perp L_3^\perp) + \deg(L^{d-1}) = d+2$, $L^{d-1}$ can be taken as generator of largest degree; thus, ${\rm Ann}(F-\mu L^d) = (L^\perp L_2^\perp L_3^\perp,L^{d-1})$. 
\end{proof}

\begin{remark}
    Notice that \cite{HT25} asks the question of when an eigenvector can be subtracted with a coefficient equal to its singular value and the rank reduces by one. In \Cref{prop:essential_var}, we showed that if the rank is equal to the number of essential variables then it can be done as long as we have a summand which is orthogonal to all the others. In \Cref{prop:binary rank 3}, we show that already for rank-$3$ binary forms, even if $[L]\in\mathcal W(F)\cap \mathrm{Eig}(F)$, $L$ must appear with a coefficient $\alpha$ in the minimal Waring decomposition of $F$ different from its singular value $\mu$.
    Thus, $F-\mu L^d$ still has rank three.
\end{remark}

\begin{example}
    It is not hard to see that there exists rank three binary forms $F\in\sym^d(\CC^2)$ with $\mathcal W(F)\cap\mathrm{Eig}(F)\neq \emptyset$. Indeed, let $F=x^d+y^d+(x+y)^d$, then, for all $d\geq3$, $[x+y]\in \mathcal W(F)\cap\Eig(F)$, while $[x],[y]\not\in\mathrm{Eig}(F)$. It is possible to check that the singular value of $x+y$ is $1+2^{1-d} \neq 1$.
\end{example}

\begin{remark}\label{rmk: ann}
    Let $[L] \in \W(F) \cap \Eig(F)$ with $F \in \langle L^d,L_2^d,\ldots,L_r^d \rangle$. The case $\rk(F) =r=3$ and $d \geq4$ is special because, as we have seen in  \Cref{prop:binary rank 3}, $F-\mu L^d$ is guaranteed to be still rank-$3$ and the degree of the largest-degree generator of $\mathrm{Ann}(F-\mu L^d)$ is $d-1$; therefore, $L^{d-1}$ could be taken as such minimal generator. If $4 \leq r \leq \frac{d+2}{2}$, $F - \mu L^d$ could be either of rank $r-1$ or $r$, depending if $\mu = \alpha$ or $\mu \neq \alpha$, respectively. In the first case ($\mu = \alpha$), ${\rm Ann}(F-\mu L^d) = (L_2^\perp\cdots L_r^\perp, H)$ with $\deg(H) = d+3-r$; in the second one ($\mu \neq \alpha$), ${\rm Ann}(F-\mu L^d) = (L^\perp L_2^\perp\cdots L_r^\perp, H)$ with $\deg(H) = d+2-r$. Thus, unless $d = 4$ and $\mu = \alpha$, $L^{d-1} \in {\rm Ann}(F-\mu L^d)$ but it cannot be taken as minimal generator.
 \end{remark}

\begin{remark}\label{rmk:algo_binary}
    In the case of binary forms, the eigenscheme of $F \in \sym^d(\CC^2)$ is a zero-dimensional of degree-$d$ in $\PP^1$, see \Cref{def:eigenscheme}. In other words, up to multiplicity, it is supported on finitely many points. Thus, from \cite[Theorem 3.5]{carlini2017waring}, we have a direct and finite way to check whether $\W(F) \cap \Eig(F)$ is not empty:
    \begin{itemize}
        \item compute the support of $\Eig(F)$;
        \item compute ${\rm Ann}(F) = (G_1,G_2)$ with $\deg(G_1) \leq \deg(G_2)$;
        \item if $G_1$ is square-free then $\W(F)$ is the set of roots of $G_1$, then we have to check if any of these is in the support of $\Eig(F)$; or,
        \item if $G_1$ is not square-free and $\deg(G_2) > \deg(G_1)$, then $\mathcal{F}(F)$ is the set of roots of $G_1$, then we have to check that not all the support of $\Eig(F)$ is contained in it; or,
        \item if $G_1$ is not square-free and $\deg(G_1) = \deg(G_2)$, the proof of \cite[Theorem 3.5]{carlini2017waring} gives a recipe to compute the forbidden locus of $F$ and, again, we have to check that not all the support of $\Eig(F)$ is contained in it.
    \end{itemize}
\end{remark}
We explicitly follow \Cref{rmk:algo_binary} for the family of binary monomials $L^{d-j}M^j$ for $1 \leq j \leq d/2$, with $L,M$ not proportional, whose rank is $d-j+1$, see \cite{CCG12}. Namely, for the elements lying on the $j$th osculating variety of the Rational Normal Curve $V^d_1$, but not on $V^d_1$. First, we compute the eigenscheme.

\begin{proposition}\label{prop: osculante}
    Let $F=L^{d-j}M^j$ for $1\leq j\leq d/2$ for linearly independent forms $L=a_0x_0+a_1x_1$, $M=b_0x_0+b_1x_1$. Denote $\widetilde{L}=-a_1x_0+a_0x_1$ and $\widetilde{M}=-b_1x_0+b_0x_1$. Let $Q_1,Q_2$ be solutions of the quadric 
    \[
       D= \det \begin{bmatrix}
            (d-j)M & jL \\
            -\widetilde{M} & \widetilde{L}
        \end{bmatrix}=0.
    \]
    Then, if $j=1$, $\Eig(F)=\{[L^\perp], [Q_1],[Q_2]\}$, otherwise $\Eig(F)=\{[L^\perp],[M^\perp], [Q_1],[Q_2]\}$.  
\end{proposition}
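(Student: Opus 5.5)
The plan is to compute the eigenscheme directly from \Cref{def:eigenscheme}. For binary forms it is cut out by the single degree-$d$ equation
\[
x_1\,\partial_0 F - x_0\,\partial_1 F = 0,
\]
the unique $2\times 2$ minor of the matrix with rows $\nabla F(x)$ and $x$. We will factor this polynomial explicitly for $F=L^{d-j}M^j$.

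First compute the gradient. Since $\partial_0 L = a_0$, $\partial_1 L = a_1$, $\partial_0 M = b_0$ and $\partial_1 M = b_1$, we get
\[
\nabla F = L^{d-j-1}M^{j-1}\bigl((d-j)M\,(a_0,a_1) + jL\,(b_0,b_1)\bigr).
\]
Substituting into the minor and collecting terms gives
\[
x_1\partial_0F - x_0\partial_1F = L^{d-j-1}M^{j-1}\Bigl((d-j)M\,(a_0x_1-a_1x_0) + jL\,(b_0x_1-b_1x_0)\Bigr).
\]
The factors in the last bracket are $a_0x_1-a_1x_0=\widetilde{L}$ and $b_0x_1-b_1x_0=\widetilde{M}$. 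Hence the defining equation of $\Eig(F)$ is
\[
L^{d-j-1}M^{j-1}\bigl((d-j)M\widetilde{L} + jL\widetilde{M}\bigr),
\]
and the quadratic factor is exactly the determinant $D$ in the statement. The degrees add up correctly: $(d-j-1)+(j-1)+2=d$.

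Next, interpret the zeros as points of $\PP(\sym^1\CC^2)$, identifying a point $(p_0:p_1)$ with the linear form $p_0x_0+p_1x_1$. The vanishing of $L$ at $(p_0:p_1)$ means $a_0p_0+a_1p_1=0$, i.e.\ $\langle L, p_0x_0+p_1x_1\rangle_\CC=0$. So the root of $L$ corresponds to $[L^\perp]$, and similarly the root of $M$ corresponds to $[M^\perp]$. This gives the following:
\begin{itemize}
\item The factor $L^{d-j-1}$ is nonconstant because $d-j-1\ge d/2-1\ge 0$. We need $d-j\ge 2$, which holds except in the trivial case $d=2$, $j=1$, where $d-j-1=0$. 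This case should be checked separately or noted.
\item The factor $M^{j-1}$ contributes the point $[M^\perp]$ exactly when $j\ge 2$. This explains the dichotomy between $j=1$ and $j\geq 2$.
\item The quadric $D$ contributes the points $[Q_1]$ and $[Q_2]$.
\end{itemize}

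The main thing to verify carefully is the sign bookkeeping: that the bracket matches $D$ as written, i.e.\ $D=(d-j)M\widetilde{L}+jL\widetilde{M}$. The remaining care point is to say that the list is of the support, possibly with coincidences (e.g.\ $Q_i$ equal to $L^\perp$). Since the statement lists sets of points, multiplicities do not matter.

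If one wants a sanity check, $D$ evaluated at the root of $L$ equals $(d-j)M\widetilde L$ there, which vanishes only if $\widetilde L$ and $L$ share a root, i.e.\ $L$ is isotropic. This is the only degenerate situation, and it merely causes overlaps in the list.
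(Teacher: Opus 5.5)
Your proposal is correct and is exactly the paper's argument: factor the unique $2\times 2$ minor as $L^{d-j-1}M^{j-1}\bigl((d-j)M\widetilde{L}+jL\widetilde{M}\bigr)=L^{d-j-1}M^{j-1}D$, then read off the roots, identifying the zero of $L$ (resp.\ $M$) with $[L^\perp]$ (resp.\ $[M^\perp]$). Your caveat about $d=2$, $j=1$ is well founded: there $L^{d-j-1}$ is constant and the statement genuinely fails (e.g.\ $F=x_0x_1$ has $\Eig(F)=\{[x_0+x_1],[x_0-x_1]\}$, which does not contain $[L^\perp]=[x_1]$), so that case should be excluded rather than checked.
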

\begin{proof}
The result follows directly by computing $\Eig(F)$ via \cref{eq: defining ideal eigenscheme}:
  \begin{equation*}
  \det\begin{bmatrix}
        \nabla F(x)\\
    x
    \end{bmatrix}=L^{d-j-1}M^{j-1}\left( (d-j)M\widetilde L+jL\widetilde M \right) = L^{d-j-1}M^{j-1} \cdot D       \qedhere
\end{equation*}
\end{proof}

Note that in the latter proposition, $[\widetilde{L}] = [L^\perp]$, and similarly for $M$.

\begin{corollary}\label{cor: osculante}
    Let $F=L^{d-j}M^j$ for $1\leq j\leq d/2$ and $D, Q_1,Q_2$ be as in \Cref{prop: osculante}, and denote $S=\{[L],[L^\perp],[M],[M^\perp]\}$. 
    Then:
    \begin{enumerate}
    \item if both $L$ and $M$ are isotropic, then $\W(F) \cap \Eig(F) = \{[M]\}$; otherwise,
    \item if $L$ and $M$ are not both isotropic, then 
        \begin{enumerate}
            \item if $j=1$,  
            \[
                \mathcal W(F)\cap \Eig(F)=     \begin{cases}
                   \{[Q_2]\}   & \text{ if }L \text{ is isotropic, where $([Q_1],[Q_2])=([L],[Q_2])$ and $[Q_2]\notin S$},\\
                   \{[L^\perp],[M],[Q_2]\}   & \text{ if }M \text{ is isotropic, where $([Q_1],[Q_2])=([M],[Q_2])$ and $[Q_2]\notin S$},\\
                       \{[L^\perp],[Q_1],[Q_2]\} & \text{ otherwise, with $[Q_i]\notin S$}.
                    \end{cases}
            \]
            \item if $j > 1$, 
            \[
                \mathcal W(F)\cap \Eig(F)=     \begin{cases}
                   \{[M^\perp], [Q_2]\}   & \text{ if }L \text{ is isotropic, where $([Q_1],[Q_2])=\{[L],[Q_2]\}$ and $[Q_2]\notin S$}, \\
                   \{[L^\perp],[M],[Q_2]\}   & \text{ if }M \text{ is isotropic, where $([Q_1,Q_2])=\{[M],[Q_2]\}$ and $[Q_2]\notin S$},\\
                       \{[L^\perp],[M^\perp], [Q_1],[Q_2]\} & \text{ otherwise,  with $[Q_i]\notin S$}.
                    \end{cases}
            \]
        \end{enumerate}
        \end{enumerate}

\end{corollary}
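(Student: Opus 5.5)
The plan is to intersect the explicit eigenscheme of \Cref{prop: osculante} with the Waring locus. The Waring locus of $F=L^{d-j}M^j$ is determined via the algorithm of \Cref{rmk:algo_binary}, and the intersection then reduces to checking which of the points $[L^\perp],[M^\perp],[Q_1],[Q_2]$ can coincide with $[L]$ or $[M]$.

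\emph{Step 1: the Waring locus.} After a linear change of coordinates sending $L,M$ to $x_0,x_1$, we have $\mathrm{Ann}(x_0^{d-j}x_1^j)=(\partial_1^{j+1},\partial_0^{d-j+1})$. Transporting back, $\mathrm{Ann}(F)=(G_1,G_2)$ with $G_1=(L^\perp)^{j+1}$ and $\deg G_2=d-j+1$.
\begin{itemize}
\item Since $G_1$ is not square-free, \Cref{rmk:algo_binary} applies.
\item For $j<d/2$, $\deg G_2>\deg G_1$, so the forbidden locus is the root set of $G_1$. Under the identification of points with linear forms used in the Apolarity Lemma, this root set is the single point $[L]$.
\item For $j=d/2$, the degrees coincide, and we invoke the recipe of \cite[Theorem 3.5]{carlini2017waring}, or directly the known description of the Waring locus of monomials. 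We expect again $\mathcal F(F)=\{[L]\}$.
\end{itemize}
Hence $\W(F)=\PP^1\setminus\{[L]\}$, and $\W(F)\cap\Eig(F)=\Eig(F)\setminus\{[L]\}$. This step, especially the balanced case $j=d/2$, is the one I expect to need the most care, since the generic recipe has to be unwound there.

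\emph{Step 2: coincidences among eigenpoints.} Write $D(x)=(d-j)M(x)\widetilde L(x)+jL(x)\widetilde M(x)$, evaluated at the coordinate vector of a point.
\begin{itemize}
\item At the point $[L]$ (vector $a$), $\widetilde L(a)=0$ while $\widetilde M(a)\neq 0$ by independence of $L$ and $M$. Thus $D([L])=j\langle L,L\rangle_\CC\widetilde M(a)$, so $[L]$ is a root of $D$ if and only if $L$ is isotropic. Symmetrically, $[M]$ is a root of $D$ if and only if $M$ is isotropic.
\item At $[L^\perp]$, $L$ vanishes but neither $M$ nor $\widetilde L$ does, so $D([L^\perp])\neq0$. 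Likewise $D([M^\perp])\neq0$.
\item Finally, $[L^\perp]=[L]$ exactly when $L$ is isotropic, and similarly for $M$.
\end{itemize}
These facts show that, outside the isotropic situations, $[Q_1],[Q_2]\notin S$, and the listed eigenpoints are distinct.

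\emph{Step 3: case analysis.}
\begin{itemize}
\item \emph{Both $L$ and $M$ isotropic.} Then $\{[L],[M]\}=\{[x_0\pm ix_1]\}$, $[L^\perp]=[L]$, $[M^\perp]=[M]$, and $D$ has roots $[L],[M]$. So $\Eig(F)$ is supported on $\{[L],[M]\}$, and removing $[L]$ gives $\{[M]\}$.
\item \emph{Only $L$ isotropic.} Then $[L^\perp]=[L]=[Q_1]$ are removed. What remains is $\{[Q_2]\}$ for $j=1$, and $\{[M^\perp],[Q_2]\}$ for $j>1$.
\item \emph{Only $M$ isotropic.} Then $[M^\perp]=[M]=[Q_1]$ lies in $\W(F)$, giving $\{[L^\perp],[M],[Q_2]\}$ in both cases $j=1$ and $j>1$.
\item \emph{Neither isotropic.} All points of $\Eig(F)$ differ from $[L]$, giving $\{[L^\perp],[Q_1],[Q_2]\}$ for $j=1$ and the four-point set for $j>1$.
\end{itemize}
The distinction between $j=1$ and $j>1$ comes directly from \Cref{prop: osculante}, since the factor $M^{j-1}$ is present only when $j>1$.
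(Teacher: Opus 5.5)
\textbf{Gaps in Steps 1--3: the statement fails in some subcases.} Your route is the paper's own: read $\mathcal F(F)$ off $\mathrm{Ann}(F)=((L^\perp)^{j+1},(M^\perp)^{d-j+1})$, then discard the forbidden points among the eigenpoints of \Cref{prop: osculante}. However, two of your steps fail, because the statement itself is false in the corresponding subcases. The paper's proof makes the same two assertions, so adding detail will not repair this.

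\emph{The balanced case $j=d/2$, which you left as an expectation.} There the degree-$\rk(F)=j+1$ part of $\mathrm{Ann}(F)$ is the pencil $\langle (L^\perp)^{j+1},(M^\perp)^{j+1}\rangle$. Each point $[P]\in\PP^1$ is a root of exactly one member, and that member fails to be square-free exactly when $[P]\in\{[L],[M]\}$. Hence $\mathcal F(F)=\{[L],[M]\}$, not $\{[L]\}$, and $[M]$ must be discarded in the $M$-isotropic and both-isotropic cases.

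The quadric $D$ also degenerates at $j=d/2$. For $L=x_0\pm ix_1$ one has $\widetilde L=\mp iL$, so $D=L\,(j\widetilde M\mp i(d-j)M)$. The second factor, evaluated at $[L]$, is a nonzero multiple of $d-2j$, and the same holds with $L,M$ swapped. So an isotropic $L$ (or $M$) is a simple root of $D$ only when $j<d/2$. Your Step 2 omits this check, and you need it to get $[Q_2]\notin S$. If both forms are isotropic and $j=d/2$, then $D\equiv 0$. For example, $F=(x_0^2+x_1^2)^{d/2}$ has $\Eig(F)=\PP^1$, so case (1) fails outright.

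\emph{The case where neither form is isotropic and $j\geq 2$.} Your claim that every point of $\Eig(F)$ differs from $[L]$ is false when $\langle L,M\rangle_\CC=0$, because then $[M^\perp]=[L]$. Take $F=x_0^3x_1^2$ ($d=5$, $j=2$). The $2\times 2$ determinant defining $\Eig(F)$ is $x_0^2x_1(3x_1^2-2x_0^2)$, so $[x_0]\in\Eig(F)$. On the other hand, $\mathrm{Ann}(F)=(\partial_1^3,\partial_0^4)$ gives $\mathcal F(F)=\{[x_0]\}$. Therefore $\W(F)\cap\Eig(F)=\{[L^\perp],[Q_1],[Q_2]\}$, not the four-point set. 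Your Step 2 compares $[L^\perp]$ with $[L]$ but never compares $[M^\perp]$ with $[L]$.

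\emph{What your argument does prove.} Once the simple-root check is added, your argument establishes the corollary for $j<d/2$, with the extra hypothesis $\langle L,M\rangle_\CC\neq 0$ in the ``otherwise'' case of (2)(b). In the excluded subcases the conclusion has to be corrected as above rather than proved.

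A minor slip: at $[L^\perp]$ the factor $\widetilde L$ evaluates to $\langle L,L\rangle_\CC$, so it does vanish when $L$ is isotropic. This is harmless, since then $[L^\perp]=[L]$.
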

\begin{proof}
 The apolar ideal to $F$ is $\mathrm{Ann}(F)=((L^\perp)^{j+1},H)$, for some $H\in\sym^{d-j+1}(\CC^2)$, so that the generator of minimal degree is $(L^\perp)^{j+1}$. By \cite[Theorem 3.5]{carlini2017waring}, the forbidden locus of $F$ is $\mathcal{F}(F)=\{ [L]\}$, so its Waring locus is $\W(F)=\PP^1\setminus\{ [L]\}$. Moreover, it is easy to see that $[Q_i]\not\in\{[L],[L^\perp],[M],[M^\perp]\}$, unless $[L]$ is isotropic, in which case $[L]=[L^\perp]$ is a root of $D$, or, if $[M]$ is isotropic, in which case $[M]=[M^\perp]$ is a root of $D$. Indeed: if $L$ is isotropic, then the evaluation of the quadric $D$ at $L$ is equal to $0$ because $L \circ L^\perp = L \circ L = 0$; while, if $L$ is not isotropic, then the evaluation of $D$ at $L$ is equal to $j (L\circ L) (L \circ \widetilde{M})$ which is different than zero because $L$ and $M$ are not proportional.
\end{proof}

For binary forms of rank greater than $(d+1)/2$ that are not monomials we can easily give a sufficient condition to understand whether the intersection of the eigenscheme and the Waring locus is nonempty.
\begin{proposition}
    Let $F\in \sym^d(\CC^2)$ with $\rk(F)>(d+1)/2$. If $\Eig(F)$ is supported on at least $d-\rk(F)+2$ distinct points then $\Eig(F)\cap \W(F)\neq \emptyset$.  
\end{proposition}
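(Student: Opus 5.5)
The plan is to use the Sylvester-type description of the apolar ideal together with the description of the forbidden locus from \cite[Theorem 3.5]{carlini2017waring}, as in \Cref{rmk:algo_binary}. Write ${\rm Ann}(F)=(G_1,G_2)$ with $\deg G_1\le \deg G_2$ and $\deg G_1+\deg G_2=d+2$, and set $r=\rk(F)$. Since $r>(d+1)/2$, we cannot have $r=\deg G_1$: the assumption forces $r$ to be strictly above $(d+2)/2\ge \deg G_1$, or at least not realized by a square-free $G_1$ of smaller degree. Hence we are in the case where $G_1$ is not square-free, $\deg G_1=d+2-r<r=\deg G_2$, and $\rk(F)=\deg G_2$. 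The boundary case $\deg G_1=\deg G_2$ would need $r=(d+2)/2$ with $G_1$ not square-free. I would handle it by the same counting, using the forbidden-locus recipe of \cite[Theorem 3.5]{carlini2017waring}; I expect that recipe to again give a forbidden locus of at most $d+2-r$ points.

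In the main case $\deg G_1<\deg G_2$, \cite[Theorem 3.5]{carlini2017waring} says that the forbidden locus $\mathcal F(F)$ is exactly the set of distinct roots of $G_1$, viewed as points of $\PP^1$ via the identification used throughout. Hence
\[
|\mathcal F(F)|\le \deg G_1=d+2-r,
\]
and in fact $|\mathcal F(F)|\le d+1-r$, since $G_1$ is not square-free and so has a repeated root. By hypothesis the support of $\Eig(F)$ has at least $d-r+2$ distinct points, which is strictly more than $|\mathcal F(F)|$. So at least one eigenvector $[L]$ lies outside $\mathcal F(F)$, i.e., $[L]\in\W(F)$. This gives $\Eig(F)\cap\W(F)\neq\emptyset$.

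The main obstacle is the bookkeeping of cases in \cite[Theorem 3.5]{carlini2017waring}. I need to check that when $r>(d+1)/2$ the forbidden locus always has at most $d+1-r$ (or at worst $d+2-r-1$) points, including when $\deg G_1=\deg G_2$ and when $F$ is a monomial-type form (the case excluded in the surrounding text, where \Cref{cor: osculante} applies instead). The counting step itself is immediate once this bound on $|\mathcal F(F)|$ is secured. The hypothesis $d-r+2$ is tailored to beat the bound $\deg G_1=d+2-r$ by one, which is exactly what the non-square-freeness of $G_1$ provides.
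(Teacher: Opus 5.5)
In the case $\deg G_1<\deg G_2$ your argument is the paper's. There $\mathcal F(F)=V(G_1)$ by \cite[Theorem 3.5(2)]{carlini2017waring}, $G_1$ is not square-free of degree $d+2-r$, so $|\mathcal F(F)|\le d+1-r$, and pigeonhole gives an eigenpoint in $\W(F)$. This case covers every $r>\lceil (d+1)/2\rceil$.

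\textbf{The gap.} Your opening claim that $r>(d+1)/2$ forces $r>(d+2)/2$ is false for even $d$. There $r=(d+2)/2$ is allowed; it is the generic rank. In that case necessarily $\deg G_1=\deg G_2=r$, because either alternative with $d_1<d_2$ contradicts $d_1+d_2=d+2$. A general member of the pencil is then square-free, so this is not a ``$G_1$ not square-free'' situation.

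In this case your expectation that the forbidden locus has at most $d+2-r$ points is false. The pencil $\mathrm{Ann}(F)_r=\langle G_1,G_2\rangle$ is base-point free, so each $P\in\PP^1$ lies on a unique member $G_P=G_2(P)G_1-G_1(P)G_2$. Moreover, $P\in\mathcal F(F)$ if and only if $G_P$ is not square-free. Hence $\mathcal F(F)$ is the union of the fibres of the degree-$r$ map $[G_1:G_2]\colon\PP^1\to\PP^1$ over its branch points. For general $F$ this is $(2r-2)(r-1)=d^2/2$ points.

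Two examples show the bound fails. For $F=x_0x_1$ (so $d=r=2$) one has $\mathcal F(F)=\{[x_0],[x_1]\}$: two points, against your bound $d+1-r=1$. A general quartic has $8$ forbidden points but only $4$ eigenpoints. So no counting argument can close this case. You would need to show directly that the eigenpoints cannot all lie in these branch fibres, or restrict the statement to $r>\lceil (d+1)/2\rceil$.

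The paper's proof has the same lacuna. It invokes item (2) of \cite[Theorem 3.5]{carlini2017waring} for every $r>(d+1)/2$, but that item requires $\deg G_1<\deg G_2$. So in the even-degree, generic-rank case the paper's argument is no more complete than your proposal.
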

\begin{proof}
    Since $\rk(F)>(d+1)/2$, by \cite[Theorem 3.5, item (2)]{carlini2017waring} the forbidden locus of $F$ is the zero locus of the lower degree (non-square-free) generator $G$ of $\mathrm{Ann}(F)$, i.e., $\mathcal{F}(F)=V(G)$. The result follows by noticing that  $\deg(G)=d-\rk(F)+2<d$, so $G$ is supported in at most $d-\rk(F)+1$ points and $\Eig(F)$ is supported in at least $d-\rk(F)+2$ points.
\end{proof}

{
Let $(d+1)/2<r<d$. We remark that a binary form $F\in \sym^d(\CC^2)$ of rank $r$ can always be expressed as $F=L^{d-1}M+\sum_{i=1}^{d-r}L_i^d$, for linear forms $L,M,L_i$, where the binary form $\sum_{i=1}^{d-r}L_i^d $ is of rank $d-r$, see for instance \cite[Proposition 4.3]{buczynski2018locus}.
Hence, if we have a concrete expression for an $F\in \sym^d(\CC^2)$ of rank strictly higher than $(d+1)/2$ then it is possible to explicitly check whether all the roots of $\Eig(F)$ lie in $\mathcal F(F)$ by direct computation: indeed, it is enough to have a generator of minimal degree of the apolar ideal and compare its roots with the eigenscheme. To give an idea, in the following example we show how $\Eig(F)\cap \W(F)\neq \emptyset$ when $F=x^{d-1}y+L^d$ for some linear form $L\in \sym^1(\CC^2)$.
\begin{example}\label{example: rango d-1 waring e eigen si intersecano sempre}
Let $F=x^{d-1}y+(ax+by)^d$ be of rank $d-1$, so in particular $b\neq 0$. Since a generator of minimal degree of ${\rm Ann}(F)$ is $y^2(ax+by)^\perp$, the forbidden locus of $F$ is $\mathcal{F}(F)=V(y^2(bx-ay))$, that is $\mathcal{F}(F)=\{ [1:0],[\frac{a}{b}:1] \}$.
We can easily compute the eigenscheme of $F$ as the zero locus of the polynomial
\begin{align*}
 D&=   \det\begin{bmatrix}
    (d-1)x^{d-1}y+a(ax+by)^{d-1} & x^{d-1}+b(ax+by)^{d-1}\\
    x & y
\end{bmatrix}\\
&=(ax+by)^{d-1}(ay-bx)+x^{d-2}((d-1)y^2-x^2).
\end{align*}
One explicitly checks that
\begin{align*}
 & D([1:0])=0 \iff a^{d-1}=-\frac{1}{b},\\
 &D([\frac{a}{b}:1])=0 \iff a^{d-2}(b^2(d-1)-a^2)=0\iff a=0 \text{ or }a=\pm b\sqrt{d-1}.
\end{align*} 
Then, if $a\notin \{0, \pm b\sqrt{d-1}\} \cup \{\left(-\frac{1}{b}\right)^{\frac{1}{d-1}}\omega \mid \omega^{d-1} = 1\}$  then $\mathcal{F}(F)\cap \Eig(F)=\emptyset$. On the other hand, even if $a\in \{0, \pm b\sqrt{d-1}\} \cup \{\left(-\frac{1}{b}\right)^{\frac{1}{d-1}}\omega \mid \omega^{d-1} = 1\}$ then only one of the two elements of $\mathcal{F}(F)$ can correspond to a root of $\Eig(F)$, meaning that in any case we have $\Eig(F)\cap \W(F)\neq \emptyset$ since $\Eig(F)\not\subseteq \mathcal F(F)$.
\end{example}
}

\subsection{Monomials}
The case of \Cref{cor: osculante} was easy to handle since the form of the apolar ideal was clear. This can be generalized to the case of monomials with $n\geq 2$ variables, since the apolar ideal and the corresponding Waring locus are well known.

 Let $F=x_0^{d_0}\cdots x_n^{d_n}$ with $1 \leq d_0\leq \cdots \leq d_n$ for some $n\geq 2$ and let $m=\max\{i \ \vert \ d_i=d_0 \}$. By \cite[Theorem 3.3]{carlini2017waring} the forbidden locus of $F$ is $\mathcal{F}(F)=V(x_0\cdots x_m)$. The following result shows that for a monomial $F$ both the intersections $\mathcal{F}(F)\cap \mathrm{Eig}(F)$ and  $\mathcal{W}(F)\cap \mathrm{Eig}(F)$ are both nonempty.

\begin{theorem}\label{prop: monomial case}
 Let $n\geq 2$, $F=x_0^{d_0}\cdots x_n^{d_n}$ with $1\leq d_0\leq \cdots \leq d_n$, and $m=\max \{ i \, \vert \, d_0=d_i \}$. If $d_0\geq 2$ then $\mathcal{F}(F)\subsetneq  \mathrm{Eig}(F)$, otherwise  $\Eig(F)\cap\mathcal F(F)=\bigcup_{i=0}^{m} \bigcup_{j\neq i} H_{i,j}$ where $H_{i,j}=V(x_i,x_j)$. In particular, in all cases we have $\Eig(F)\cap \W(F)\neq \emptyset$.

\end{theorem}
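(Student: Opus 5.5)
The plan is to write down the eigenscheme of a monomial explicitly and then compare it with the known forbidden locus $\mathcal F(F)=V(x_0\cdots x_m)$ from \cite[Theorem 3.3]{carlini2017waring}. Writing $F=x^{\mathbf d}$ with $\mathbf d=(d_0,\dots,d_n)$, we have $\partial_iF=d_i\,x^{\mathbf d-e_i}$. Hence the $2\times 2$ minors in \cref{eq: defining ideal eigenscheme} are, for $i<j$,
\[
x_j\partial_iF-x_i\partial_jF = x^{\mathbf d-e_i-e_j}\,\bigl(d_i x_j^2-d_j x_i^2\bigr).
\]
Note that $x^{\mathbf d-e_i-e_j}$ is a genuine monomial, since all $d_l\geq 1$. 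So $\Eig(F)$ is the common zero locus of these products, and all later steps are bookkeeping on which factor vanishes.

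\textbf{Step 1: a point of $\Eig(F)\cap\W(F)$ in all cases.} Take $P=[\sqrt{d_0}:\cdots:\sqrt{d_n}]$. All its coordinates are nonzero, so $P\notin\mathcal F(F)$ and therefore $[P]\in\W(F)$. Moreover $d_i d_j-d_j d_i=0$, so every second factor vanishes at $P$ and $P\in\Eig(F)$. This proves the ``in particular'' claim. In the case $d_0\geq2$, the same point also gives strictness of the inclusion $\mathcal F(F)\subsetneq\Eig(F)$.

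\textbf{Step 2: the case $d_0\geq 2$.} Let $x_k=0$ for some $k\leq m$, so that $d_k=d_0\geq 2$. I check that every monomial $x^{\mathbf d-e_i-e_j}$ contains $x_k$ with positive exponent:
\begin{itemize}
\item the exponent of $x_k$ is $d_k$ if $k\notin\{i,j\}$;
\item it is $d_k-1\geq1$ if $k\in\{i,j\}$.
\end{itemize}
Hence all minors vanish and $\mathcal F(F)\subseteq\Eig(F)$.

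\textbf{Step 3: the case $d_0=1$.} Here every $k\leq m$ has $d_k=1$, and this is the main (though still elementary) point, where the vanishing is more delicate. Take $x_k=0$ with $k\leq m$ and consider the minors for the pairs $\{k,j\}$. The monomial factor contains no $x_k$, and the second factor becomes $d_k x_j^2$.

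\emph{Necessity.} If all other coordinates were nonzero, then the minor for $\{k,j\}$ would be a nonzero monomial times $x_j^2\neq 0$, so the point would not lie in $\Eig(F)$. Hence some $x_l=0$ with $l\neq k$, i.e. the point lies in $H_{k,l}$.

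\emph{Sufficiency.} Suppose $x_k=x_l=0$. Each pair $\{i,j\}\neq\{k,l\}$ leaves $x_k$ or $x_l$ with exponent at least $d_\bullet\geq1$ in the monomial, or else coincides with one of $k,l$ so that the other is still present. The only other possibility is the pair $\{k,l\}$ itself, whose second factor $d_kx_l^2-d_lx_k^2$ vanishes.

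This yields $\Eig(F)\cap\mathcal F(F)=\bigcup_{i\le m}\bigcup_{j\ne i}H_{i,j}$. The only care needed is the exponent count when $j=l$, which is handled in the sufficiency check above.
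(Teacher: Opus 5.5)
Your proof is correct and follows essentially the paper's route: you write the $2\times 2$ minors as $x^{\mathbf d-e_i-e_j}(d_ix_j^2-d_jx_i^2)$ and check their vanishing on the coordinate hyperplanes $V(x_k)$, $k\le m$, whose union is $\mathcal F(F)$. Your explicit eigenvector $[\sqrt{d_0}:\cdots:\sqrt{d_n}]\in\W(F)$ and your sufficiency check $H_{k,l}\subseteq\Eig(F)$ make explicit two points the paper treats only briefly: strictness, which the paper argues by comparing generator degrees (not a valid criterion on its own), and the final nonemptiness of $\Eig(F)\cap\W(F)$ when $d_0=1$.
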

\begin{proof}
Denote by $I$ the ideal defined in \cref{eq: defining ideal eigenscheme}. Assume that $d_0\geq 2$. Then the ideal becomes
\begin{align*}
I&=\left(\frac{x_0^{d_0}\cdots x_n^{d_n}}{x_ix_j}(\sqrt{d_i}x_j-\sqrt{d_j}x_i)(\sqrt{d_i}x_j+\sqrt{d_j}x_i) \right)_{i<j}, 
\end{align*}
from which it is obvious that $\mathcal{F}(F)\subset \Eig(F)$. The inclusion is strict since the two ideals are generated in different degrees.

We now assume $d_0=1$, so $F=x_0\cdots x_m x_{m+1}^{d_{m+1}}\cdots x_n^{d_n}$. In this case $I$ is more involved. The 2-minors in \cref{eq: defining ideal eigenscheme}, considering columns $i<j$, are the following:
\begin{align}\label{eq: ideali 2-minori}
\begin{cases}
    \prod\limits_{\substack{\ell \leq m\\ \ell\neq i,j}}x_\ell \prod\limits_{\ell \geq m+1} x_\ell^{d_\ell}(x_j-x_i)(x_j+x_i) & \text{ if } i<j\leq m,\\
    \prod\limits_{\substack{\ell \neq i\\ \ell \leq m}} x_\ell \prod\limits_{\substack{\ell \neq j\\ \ell \geq m+1}}x_\ell^{d_\ell} x_j^{d_j-1}(x_j-d_jx_i) & \text{ if } i\leq m< j, \\
  \prod\limits_{ \ell \leq m} x_\ell \prod\limits_{\substack{\ell \neq i,j\\ \ell \geq m+1}}x_\ell^{d_\ell}  x_i^{d_i-1}x_j^{d_j-1}(\sqrt{d_i}x_j-\sqrt{d_j}x_i) (\sqrt{d_i}x_j+\sqrt{d_j}x_i) & \text{ if }m<i<j.
\end{cases}
\end{align}

\bigskip

{We remark that $ \mathcal F(F)=V(x_0\cdots x_m)$, and if we denote by $H_i=V(x_i)$ then $\mathcal F(F)=\bigcup_{i=0}^mH_i $. Hence, a point $p=[p_0:\dots:p_n]\in \mathcal F(F)$ has $p_i=0$ for some $ 0\leq i\leq m$ and $\Eig(F)\cap \mathcal F(F)=\bigcup_{i=0}^m (H_i\cap \Eig(F))$. To prove the result, we show that if $p\in \Eig(F)$, then there is another $p_j=0$, with $j\neq i$. We show the result when $p_0=0$, and the other cases for $i\in\{1,\dots,m\}$ follows by the same argument.

Notice that if $p_0 = 0$, the third line of \cref{eq: ideali 2-minori} is trivial, as it factors through $x_0$. In the first and second lines of \cref{eq: ideali 2-minori}, the evaluation at $p$ is a monomial where all $p_1,\ldots,p_n$ appear. 
Thus if we want it to vanish then there must be $j\ne0$ such that $p_j=0$. Therefore the vanishing of the ideal intersected with $\mathcal F(F)$ is
\begin{equation*}
\Eig(F)\cap\mathcal F(F)=\bigcup_{i=0}^{m} \bigcup_{j\neq i} H_{i,j}. \qedhere
\end{equation*}
}\end{proof}

\section{The critical Waring variety}\label{section: critical W locus}

In this section we focus on the variety of forms $F$ such that $\mathcal W(F)\cap \mathrm
{Eig}(F)\neq \emptyset$.

\begin{definition}\label{def: critical waring locus}
    Fix integers $n,d\geq 2$ {and $r \geq 1$}. The \emph{critical Waring variety of rank-$r$ forms} or the \emph{$r$-critical Waring variety} is
    \[
        \mathrm{WE}_{n,r}^d:=\overline{\{F\in \sigma_r^\circ(V_n^d)\mid \mathcal W(F)\cap \mathrm{Eig}(F)\neq \emptyset\}}\subset \PP^{\binom{n+d}{d}-1}=\PP(\sym^d(\CC^{n+1})),
    \]
    where the closure is taken in the Zariski topology.

\end{definition}

\begin{remark}\label{rmk: invariance}
In this section, when studying the elements in $\mathrm{WE}_{n,r}^d$ we will often restrict to assume that $F$ has $[x_0]\in \mathcal W(F)\cap \mathrm{Eig}(F)$. We remark that there is no loss of generality in assuming so, due to the equivariance of the eigenscheme and the Waring locus under the natural action of $\mathrm{O}(n+1)$ in $\sym^{d}(\CC^{n+1})$ induced by the action of the same group in $\sym^1(\CC^{n+1})\cong \CC^{n+1}$. 

More precisely, if $A\in \mathrm O(n+1)$, then $\Eig(A\cdot F)=A\cdot \Eig(F)$, see for instance \cite{Ott22}, and also $\mathcal W(A\cdot F)=A\cdot \mathcal W(F)$. In fact, it is classically known that the latter holds true for $A\in \mathrm{GL}(n+1)$. 

In practice, it means that we can fix a linear form $L$ and study the locus $X_L$ of forms $F\in \sym^d(\CC^{n+1})$ such that $[L]\in \mathcal W(F)\cap\mathrm{Eig}(F)$. It follows that $A\cdot [L]\in \mathcal W(A\cdot F)\cap\mathrm{Eig}(A\cdot F)$. Thus, for a non-isotropic $L$, 
$$
\mathrm{WE}_{n,r}^d=\overline{ \mathrm{O}(n+1)\cdot X_L}. 
$$

\end{remark}
We now aim to obtain a more uniform description of $\mathrm{WE}_{n,r}^d$ utilising the aforementioned symmetry reduction. 

More precisely, a degree $d$-form $F\in \sym^d (\CC^{n+1})$ of rank at most $r$ can always be written, up to  $\mathrm{O}(n+1)$-action and rescaling, as $F=x_0^d+ \sum_{i=1}^{r-1} (\alpha_{i,0}x_0+\cdots +\alpha_{i,n}x_n)^d\in\sym^d(\CC^{n+1})$. Using the the so-called \emph{$r$-abstract secant variety}
$$
\mathrm{Ab}\sigma_r(V_n^d)=\{ (G, (p_1,\dots,p_r)) \, |\, G\in \langle p_1,\dots, p_r \rangle \} \subset \PP(\sym^d(\CC^{n+1})) \times  (\PP^n)^r
$$
we can look at the parameter space $\PP(\CC[\alpha_{i,j}])\cong \PP^{(n+1)(r-1)-1}$ of the $(r-1)$-abstract secant variety containing $[F]$ and give equations characterizing $X_{x_0}$ and compute its dimension and degree.

\begin{theorem}\label{thm:equationsx0}
     Let $F=x_0^d+\sum_{i=1}^{r-1} (\alpha_{i,0}x_0+\cdots +\alpha_{i,n}x_n)^d\in\sym^d(\CC^{n+1})$ with $r\leq \binom{n+d}{d}/(n+1)$ . The locus in $\PP^{(n+1)(r-1)-1}$ for which $x_0$ is an eigenvector is a degree $d^n$ variety of codimension $n$ with equations 
     \begin{align}\label{eq: equations gj}
     g_j=\sum_{i=1}^{r-1}\alpha_{i,0}^{d-1}\alpha_{i,j} \text{ for }j=1,\dots, n
     \end{align}
     in $\PP^{(n+1)(r-1)-1}$. In particular, when $n=1$, this locus is a degree $d$ hypersurface. Moreover, the locus for which $x_0$ has singular value equal to $1$ is a degree $d^{n+1}$ variety of codimension $n+1$ with additional equation $\sum_{i=1}^{r-1} \alpha_{i,0}^d$.
\end{theorem}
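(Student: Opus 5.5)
We compute the eigenvector condition for $x_0$ directly through the apolar action, using \Cref{lemma:apolar vs eig}. The point $[x_0]$ is an eigenvector of $F$ if and only if there is $\mu\in\CC$ with $x_0^{d-1}\circ(F-\mu x_0^d)=0$. Write $L_i=\alpha_{i,0}x_0+\cdots+\alpha_{i,n}x_n$. Since $x_0^{d-1}\circ L_i^d=\frac{d!}{1!}\alpha_{i,0}^{d-1}L_i$ and $x_0^{d-1}\circ x_0^d=d!\,x_0$, the condition becomes
\[
(1-\mu)x_0+\sum_{i=1}^{r-1}\alpha_{i,0}^{d-1}L_i=0 .
\]
We then read off the coefficient of each variable. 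The coefficient of $x_j$ for $j\geq1$ gives exactly $g_j=\sum_i\alpha_{i,0}^{d-1}\alpha_{i,j}=0$. The coefficient of $x_0$ gives $1-\mu+\sum_i\alpha_{i,0}^d=0$, which can always be solved for $\mu$. So the eigenvector locus is $V(g_1,\dots,g_n)$.

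Imposing in addition $\mu=1$ adds the equation $g_0=\sum_i\alpha_{i,0}^d$. All these equations are homogeneous of degree $d$ in the $\alpha$'s, consistent with the projective parameter space $\PP^{(n+1)(r-1)-1}$. Here the scaling of $x_0^d$ has been fixed, so we treat $F$ up to simultaneous scaling of the $\alpha$ coordinates; I would remark on this normalization briefly.

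It remains to prove that $g_1,\dots,g_n$ (and then $g_0,\dots,g_n$) form a regular sequence, i.e.\ cut out a complete intersection. Bézout then gives degree $d^n$ (resp.\ $d^{n+1}$) and codimension $n$ (resp.\ $n+1$). I would show that the variety has the expected codimension by exhibiting a component of it, or a point where the Jacobian has full rank, and then use the fact that every component of $V(g_1,\dots,g_n)$ has codimension at most $n$.

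The cleanest route is a dimension count of the whole zero set. Let $a=(\alpha_{i,0})_i$ be the vector of first coordinates. For fixed $a$ with at least one nonzero entry, each $g_j$ is a nonzero linear form in the separate variables $(\alpha_{i,j})_i$, and different $j$ involve disjoint sets of variables. So the fibre over such $a$ has codimension exactly $n$. The locus where all $\alpha_{i,0}=0$ has codimension $r-1$ and lies entirely in the zero set. It is therefore a component of too small codimension whenever $r-1<n$, and this is where the hypothesis $r\leq\binom{n+d}{d}/(n+1)$ enters: some rank condition must rule out this degenerate component. This is the step I expect to be the main obstacle. My first attempt would be to note that the points with $a=0$ do not correspond to honest rank-$r$ configurations with $x_0$ in the Waring locus, and to discard them. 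Failing that, I would interpret the statement as concerning the closure of the part with $a\neq0$. For $g_0$, the same fibre argument works over $a$ with $\sum\alpha_{i,0}^d=0$ and $a\neq0$, since $g_0$ depends only on $a$ and is a nonzero form in $a$ for $r\geq2$. This adds one more codimension.

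Granting the complete intersection property, the degree statements follow from Bézout. In particular, for $n=1$ we get the degree-$d$ hypersurface $\sum_i\alpha_{i,0}^{d-1}\alpha_{i,1}=0$.
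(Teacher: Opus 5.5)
\textbf{There is a genuine gap, and it is the step you flagged.} Your set-up matches the paper's: the apolar computation gives exactly its system. Your fibration over $a=(\alpha_{1,0},\dots,\alpha_{r-1,0})$ is a cleaner substitute for the paper's Jacobian computation, since it shows that $U=V(g_1,\dots,g_n)\cap\{a\neq0\}$ is irreducible of codimension $n$. None of your proposed fixes for the degenerate component works.
\begin{itemize}
\item The hypothesis $r\le\binom{n+d}{d}/(n+1)$ is an \emph{upper} bound on $r$. It cannot remove the linear space $\{a=0\}$, which has codimension $r-1$ and lies inside $V(g_1,\dots,g_n)$.
\item Discarding $\{a=0\}$ is not legitimate, because these are honest configurations. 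For $r=2$, every $F=x_0^d+M^d$ with $0\neq M\in\CC[x_1,\dots,x_n]$ has rank $2$ and $[x_0]\in\W(F)\cap\Eig(F)$ (cf.\ \Cref{prop:essential_var}).
\item Passing to $\overline U$ rescues the codimension but not the degree. For $r=2$ the locus in $\PP^n$ is $\{\alpha_{1,0}=0\}\cup\{[1:0:\cdots:0]\}$, and $\overline U$ is a single reduced point, of degree $1\neq d^n$.
\end{itemize}
So for $2\le r\le n$ the statement as written is false. The singular-value-one part fails similarly whenever $r\le n+1$: for $r=2$ one gets $V(g_0,\dots,g_n)=\{\alpha_{1,0}=0\}$, already when $n=1$. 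The binary hypersurface claim ($n=1$, $r\ge 2$) is unaffected.

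\textbf{What your argument proves once you add $r-1\ge n$.} By Krull's principal ideal theorem, every component of $V(g_1,\dots,g_n)$ has codimension at most $n$. Hence $\{a=0\}$ is either contained in $\overline U$ (if $r-1>n$) or is an extra component of codimension exactly $n$ (if $r-1=n$). In both cases $(g_1,\dots,g_n)$ has height $n$ in the polynomial ring. So $g_1,\dots,g_n$ is a regular sequence, and B\'ezout gives degree $d^n$. With $g_0$ added, the same reasoning works under $r-1\ge n+1$: fibre over the hypersurface $\{\sum_i\alpha_{i,0}^d=0\}\setminus\{0\}$ of $a$-space.

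\textbf{The paper's proof has the same hole.} Its Jacobian has rank $n$ only where some $\alpha_{i,0}\neq0$, which controls only the components meeting $\{a\neq0\}$. The observation $V(I)\not\subset\{a=0\}$ does not exclude $\{a=0\}$ as an additional component of smaller codimension, and the degree is never argued beyond this. So you have found a real defect in the statement rather than missed a trick. The repair is an extra lower bound on $r$, not the stated upper bound.
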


\begin{proof}
    By \Cref{lemma:apolar vs eig} we have that $x_0^{d-1}(\partial)\circ (\lambda x_0^d+\sum_{i=1}^{r-1} (\alpha_{i,0}x_0+\cdots +\alpha_{i,n}x_n)^d)=0$,  therefore $$
    \begin{cases}
        \sum_{i=1}^{r-1}\alpha_{i,0}^d+\lambda=0,\\
        \sum_{i=1}^{r-1} \alpha_{i,0}^{d-1}\alpha_{i,1}=0,\\
        \ \ \ \ \  \vdots\\
        \sum_{i=1}^{r-1} \alpha_{i,0}^{d-1}\alpha_{i,n}=0.
    \end{cases}
    $$
    Denote by $g_i$ the equation corresponding to the system's $(i+1)$-th row. Notice that any solution of $I=(g_1,\dots, g_n)$ gives also a solution of $g_0$ for an appropriate $\lambda$, which corresponds to $1-\mu$, where $\mu$ is the singular value of $x_0$. 
    
    We first analyse the variety defined by $I$, i.e., the locus such that $[x_0]\in\Eig(F)\cap \mathcal W(F)$. Let $
    J=\begin{bmatrix}
        \frac{\partial g_1}{\partial \alpha_{i,j}}&\cdots& \frac{\partial g_n}{\partial \alpha_{i,j}} 
    \end{bmatrix}
    $ be the Jacobian matrix of the polynomials $g_1,\dots,g_n$, then $\codim V(I)=\mathrm{rank} (J)$ on a generic point. Consider the submatrix of $J$ corresponding to the derivatives with respect to $\alpha_{i,j}$ for a fixed $i$, with $j$ ranging from $1$ to $n$:
    $$
    \begin{bmatrix}
        \alpha_{i,0}^{d-1}\\
        &\alpha_{i,0}^{d-1}\\
        &&\ddots\\
        &&&\alpha_{i,0}^{d-1}
    \end{bmatrix}.
    $$
  It has rank $n$ in the open subset $\alpha_{i,0}\neq0$. Moreover, it is easy to see that $V(I)\not\subset \{\alpha_{i,0}=0\mid i=1,\dots,r-1\}$, thus $\codim V(I)=n$.

    Consider now the additional equation $g_0'=\sum_{i=1}^{r-1}\alpha_{i,0}^d$ corresponding to when $x_0$ has singular value $\mu=1$ and let $I'=(g_0',g_1,\dots,g_n)$. As before we consider a submatrix of the Jacobian of $g_0',g_1,\dots,g_n$  corresponding to the derivatives of $\alpha_{i,j}$ with fixed $i$ and $j$ ranging from $0$ to $n$. We have
$$
    \begin{bmatrix}
        d\alpha_{i,0}^{d-1}&(d-1)\alpha_{i,0}^{d-2}\alpha_{i,1}\\
        &\alpha_{i,0}^{d-1}&(d-1)\alpha_{i,0}^{d-2}\alpha_{i,2}\\
        &&\alpha_{i,0}^{d-1}\\
        &&&\ddots\\
        
        &&&\alpha_{i,0}^{d-1}&(d-1)\alpha_{i,0}^{d-2}\alpha_{i,n}\\
        &&&&\alpha_{i,0}^{d-1}
    \end{bmatrix},
$$
which has rank $(n+1)$, and again $V(I')\not\subset \{\alpha_{i,0}=0\mid i=0,\dots,n\}$, thus $V(I')$ has codimension $n+1$.
\end{proof}

For $i=1,\dots, n$ denote by $H_i\subset \PP^{\binom{n+d}{d}-1}$ the hyperplane defined by the vanishing of the apolar action against the monomial $x_0^{d-1}x_i$, i.e., $H_i = \{F\in {\rm Sym}^d(\mathbb{C}^{n+1}) \mid \langle x_0^{d-1}x_i, F \rangle = 0\}$. 
The conditions obtained in \Cref{thm:equationsx0} define a locus in $\sym^d(\CC^{n+1})$ and its closure can be understood as the $x_0^d$ joined with $\sigma_r^\circ(V_n^d)$ intersected with the hyperplanes $H_i$.

\begin{corollary}\label{cor: join}
    Let $F=x_0^d+G$, with $[G]\in \sigma_{r-1}^\circ(V_n^d)$. Then $x_0^d$ is an eigenvector of $F$ if and only if $H_i (G)=0 $, for $i=1,\dots, n$.
\end{corollary}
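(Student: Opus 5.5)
The plan is to reduce everything to \Cref{lemma:apolar vs eig} together with the proportionality between the apolar action and the Bombieri-Weyl product. By \Cref{lemma:apolar vs eig}, $x_0$ is an eigenvector of $F=x_0^d+G$ if and only if there is $\mu\in\CC$ with $x_0^{d-1}(\partial)\circ(F-\mu x_0^d)=0$. Here $x_0^{d-1}(\partial)\circ(\cdot)$ is a linear form, so it vanishes if and only if its pairing with each variable $x_j$, $j=0,\dots,n$, vanishes. By the lemma identifying the apolar action with $d!$ times the Bombieri-Weyl product, the $j$-th coordinate of this linear form is, up to a nonzero constant depending only on $d$, the quantity $\langle x_0^{d-1}x_j,\,F-\mu x_0^d\rangle$.

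I would then split into the case $j=0$ and the cases $j\geq 1$. For $j\ge 1$, orthonormality of the monomial basis gives $\langle x_0^{d-1}x_j, x_0^d\rangle=0$. Hence the condition reads $\langle x_0^{d-1}x_j,G\rangle=0$, which is exactly $H_j(G)=0$, and it does not involve $\mu$. For $j=0$, the condition is $\langle x_0^d,G\rangle+(1-\mu)\langle x_0^d,x_0^d\rangle=0$. Since $\langle x_0^d,x_0^d\rangle=1\neq 0$, this can always be solved uniquely for $\mu$. Therefore the existence of $\mu$ is equivalent to the $n$ conditions $H_j(G)=0$ for $j=1,\dots,n$, which proves both implications.

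As a consistency check, writing $G=\sum_{i=1}^{r-1}(\alpha_{i,0}x_0+\cdots+\alpha_{i,n}x_n)^d$, one gets $\langle x_0^{d-1}x_j,G\rangle$ proportional to $\sum_i\alpha_{i,0}^{d-1}\alpha_{i,j}=g_j$. This recovers the equations of \Cref{thm:equationsx0}.

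I do not expect a real obstacle. The only points needing care are the normalization constants between $\partial$-action and Bombieri-Weyl pairing, which are nonzero multiples and so irrelevant, and noting that $x_0$ is non-isotropic, so the $j=0$ equation is solvable for $\mu$. The hypothesis $[G]\in\sigma^\circ_{r-1}$ is not used in the equivalence itself; it only places $F$ in the relevant rank locus.
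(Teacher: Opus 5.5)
Your proof is correct and follows essentially the paper's route. The paper reads this corollary off the system in the proof of \Cref{thm:equationsx0}, which comes from \Cref{lemma:apolar vs eig}: the $x_0$-equation only fixes the singular value, and the remaining $n$ equations are the vanishing of the $x_0^{d-1}x_j$-coefficients, i.e.\ $H_j(G)=0$. Your working directly with the coefficients of $G$ instead of the parameters $\alpha_{i,j}$ is only a cosmetic difference.
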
 

This allows us to show that $\mathrm{WE}_{n,r}^d$ contains rank $r$ forms in many cases.

\begin{proposition}\label{prop:rankrbinary}
    The variety defined by the equations $g_1,\dots,g_n$ of \cref{eq: equations gj} contains a polynomial of Waring rank $r\leq \frac{d+1}{2}$.
\end{proposition}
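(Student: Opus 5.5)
Our plan is to construct an explicit point of $V(g_1,\dots,g_n)$ and certify its rank by restricting to binary forms. Since the $g_j$ only constrain the coefficients $\alpha_{i,j}$, it suffices to place all summands in the two variables $x_0,x_1$, i.e.\ $\alpha_{i,j}=0$ for $j\ge 2$. Then $g_2,\dots,g_n$ vanish identically and only
\[
g_1=\sum_{i=1}^{r-1}\alpha_{i,0}^{d-1}\alpha_{i,1}=0
\]
remains. The resulting $F=x_0^d+\sum_{i=1}^{r-1}(\alpha_{i,0}x_0+\alpha_{i,1}x_1)^d$ is a binary form, and its Waring rank in $n+1$ variables equals its rank as a binary form, because minimal decompositions only use essential variables. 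So we reduce to $n=1$.

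In the binary case we choose $r-1$ pairwise non-proportional points $[\alpha_{i,0}:\alpha_{i,1}]$, all distinct from $[1:0]$, satisfying the single equation $g_1=0$. For $r-1\ge 2$ this is easy. Fix $r-2$ generic points with $\alpha_{i,0}=1$ and distinct $\alpha_{i,1}\neq 0$. Then set $\alpha_{r-1,0}=1$ and $\alpha_{r-1,1}=-\sum_{i\le r-2}\alpha_{i,1}$. For generic choices this last value is nonzero and distinct from the others, so all $r$ points $[x_0],[L_1],\dots,[L_{r-1}]$ are pairwise distinct.

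When $r=2$ the equation reads $\alpha_{1,0}^{d-1}\alpha_{1,1}=0$, which forces $L_1$ to be proportional either to $x_0$ or to $x_1$. We take $L_1=x_1$, giving $F=x_0^d+x_1^d$, which has rank $2$. (The case $r=1$, $F=x_0^d$, is trivial.)

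It remains to show that $F$, a sum of $r\le (d+1)/2$ powers of pairwise distinct linear forms, has rank exactly $r$. By Sylvester's theorem as recalled before \Cref{prop:binary rank 3}, with $Z$ the $r$ distinct points, $I_Z=(G)$ with $G$ square-free of degree $r$, and $G\in\mathrm{Ann}(F)$. Since $\mathrm{Ann}(F)=(G_1,G_2)$ with $\deg G_1+\deg G_2=d+2$, having $\deg G_1<r$ would mean $F$ lies in the span of fewer than $r$ powers. That is excluded because $r$ distinct $d$-th powers are linearly independent when $r\le d+1$, and, more to the point, no nontrivial relation $\sum c_iL_i^d=\sum M_k^d$ with fewer total terms exists when the total number of terms is at most $d+1$.

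The main obstacle is exactly this rank lower bound. We plan to argue it via apolarity: if $\rk F=s<r$, then $\mathrm{Ann}(F)$ contains a square-free form of degree $s$. The ideal $\mathrm{Ann}(F)$ then contains two forms of degrees $s$ and $r$, where $s+r\le 2r-1\le d$. This contradicts $\deg G_1+\deg G_2=d+2$, unless $G$ is a multiple of the degree-$s$ generator, which is impossible since the roots of $G$ (the points of $Z$) would then contain a decomposition of $F$ of length $s$, contradicting linear independence of the $r$ powers. The bound $r\le (d+1)/2$ is used precisely here.
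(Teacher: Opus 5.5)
Your proof is correct and follows the paper's route. You restrict to $x_0,x_1$, normalize $\alpha_{i,0}=1$ so that $g_1$ becomes the linear equation $\sum_i\alpha_{i,1}=0$, choose distinct nonzero solutions (the paper takes $\alpha_{i,1}=\xi^i$ with $\xi$ a primitive $(r-1)$-th root of unity), and certify rank $r$ via Sylvester/apolarity. Two of your additions make the argument more complete than the paper's: the separate treatment of $r=2$ is actually needed, since the roots-of-unity choice gives $g_1=1\neq 0$ there, and your linear-independence argument supplies the lower bound $\rk(F)\geq r$ (using $r\le\frac{d+1}{2}$), which the paper only asserts from the square-free annihilator.
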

\begin{proof}
  We will show the result for $V_1^d\subseteq V_n^d$. It is sufficient to prove that $$\{[1:0],[\alpha_{1,0}:\alpha_{1,1}],\dots, [\alpha_{r-1,0}:\alpha_{r-1,1}]\}\subset\PP^1$$ consists of $r$ distinct points. Indeed, in such case we have that $F=x_0^d+\sum_{i=1}^{r-1} (\alpha_{i,0}x_0+\alpha_{i,1}x_1)^d$ is annihilated by $\partial_1\prod_{i=1}^{r-1}(\alpha_{i,1}\partial_0-\alpha_{i,0}\partial_1)$, which is square-free, and thus $\mathrm{rk}(F)=r$. It is easy to verify that the equation $\sum_{i=1}^{r-1}\alpha_{i,0}^{d-1}\alpha_{i,1}=0$ has a solution satisfying such condition, for instance letting $\alpha_{i,0}=1$, we obtain a linear equation $\sum_{i=1}^{r-1}\alpha_{i,1}=0$, and for instance $[\alpha_{i,0}:\alpha_{i,1}]=[1:\xi^i]$ provides a solution with the required property, where $\xi$ is a $(r-1)$-fundamental root of unity.
\end{proof}

\begin{proposition}\label{prop:rk in WE}
    Let $m_{n-1}$ be the maximum Waring rank in $\sym^d(\CC^n)$. Then $\mathrm{WE}_{n,r}^d$ contains a polynomial of Waring rank $r$ for all $r\leq \max\{\frac{d+1}{2}, m_{n-1}+1\}$. 
\end{proposition}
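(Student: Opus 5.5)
The plan is to treat the two ranges of $r$ separately, since the bound is a maximum of two quantities.

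\emph{The range $r\le \frac{d+1}{2}$.} This case is essentially \Cref{prop:rankrbinary}. There we built a binary form $F=x_0^d+\sum_{i=1}^{r-1}(x_0+\xi^i x_1)^d$ which satisfies the equations $g_1,\dots,g_n$ and has rank exactly $r$. By \Cref{cor: join}, $x_0$ is an eigenvector of $F$. Since the decomposition is minimal, $[x_0]\in\W(F)$. Hence $F\in \mathrm{WE}^d_{n,r}$, viewing $\sym^d(\CC^2)\subset\sym^d(\CC^{n+1})$. One point needs a check: regarded in $n+1$ variables, $x_0$ must still be an eigenvector. This holds because the conditions $\langle x_0^{d-1}x_j,G\rangle=0$ for $j\ge 2$ are automatic, as $G$ involves only $x_0,x_1$. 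The rank is also unchanged, since rank is preserved under adding variables.

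\emph{The range $r\le m_{n-1}+1$.} Here I use \Cref{prop:essential_var}-type orthogonality. Choose $G\in\sym^d(\CC^n)=\sym^d(\langle x_1,\dots,x_n\rangle)$ with $\rk(G)=r-1$. Such a $G$ exists for every $r-1\le m_{n-1}$, because the set of ranks attained in $\sym^d(\CC^n)$ has no gaps: removing one summand from a minimal decomposition of a rank-$k$ form gives a form of rank exactly $k-1$. Set $F=x_0^d+G$. Since $x_0$ is orthogonal to every linear form in $x_1,\dots,x_n$, we get $x_0^{d-1}\circ G=0$. So $x_0^{d-1}\in\mathrm{Ann}(F-x_0^d)$, and by \Cref{lemma:apolar vs eig} $x_0$ is an eigenvector with singular value $1$. (Equivalently, every $H_i(G)=0$ in \Cref{cor: join}.)

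It remains to show $\rk(F)=r$ and $[x_0]\in\W(F)$. The step I expect to be the main obstacle is the rank additivity $\rk(x_0^d+G)=1+\rk(G)$ when $G$ does not involve $x_0$. This is a known instance of the Strassen-type additivity for sums in disjoint variables when one summand is a power; see Carlini–Catalisano–Chiantini and related work. I would cite it directly rather than reprove it. A direct argument would go as follows. Take a minimal decomposition $F=\sum_{k=1}^s M_k^d$ and set $x_0=0$, which gives a decomposition of $G$ of length $s$. If $s=\rk(G)$, every $M_k$ restricted to $x_0=0$ is needed, and applying $\partial_0^{d}$-type arguments yields a contradiction. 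This sketch is delicate, so citing is preferable. Given additivity, $\rk(F)=r$ and $x_0^d+G$ with a minimal decomposition of $G$ is a minimal decomposition containing $x_0$, so $[x_0]\in\W(F)\cap\Eig(F)$.

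Combining the two ranges gives the statement for all $r\le\max\{\frac{d+1}{2},m_{n-1}+1\}$.
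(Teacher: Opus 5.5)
Your proposal is correct and follows the paper's proof. The range $r\le \frac{d+1}{2}$ is handled by \Cref{prop:rankrbinary}. The range $r\le m_{n-1}+1$ uses $F=x_0^d+G$ with $G\in\sym^d(\langle x_1,\dots,x_n\rangle)$ of rank $r-1$, the Carlini--Catalisano--Chiantini additivity result giving $\rk(F)=r$ and hence $[x_0]\in\W(F)$, and the vanishing of the coefficients of $x_0^{d-1}x_j$ giving $[x_0]\in\Eig(F)$. Your extra remarks (the attainable ranks in $\sym^d(\CC^n)$ have no gaps, and rank is unchanged by adding variables) fill in details the paper leaves implicit. One small point: the explicit form $x_0^d+\sum_{i=1}^{r-1}(x_0+\xi^ix_1)^d$ fails for $r=2$, since there $\xi=1$ and $g_1=1\neq 0$; that case is still covered by your second range, because $m_{n-1}\ge 1$.
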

\begin{proof}
  When $\max\{\frac{d+1}{2}, m_{n-1}+1\}=\frac{d+1}{2}$, the result follows from \Cref{prop:rankrbinary}. 
  Otherwise, let $G\in \sym^d(\CC^n)$ in the variables $x_1,\dots,x_n$ with $\rk(F)=r-1\leq m_{n-1}$. By \cite[Theorem 1.1]{CCC15}, it follows that $F=x_0^d+G$ has rank $r$ and $[x_0]\in\W(F)$. Moreover, as the monomials $x_0^{d-1}x_j$ have coefficient equal zero for $j\neq 0$, it follows that $[x_0]\in\Eig(F)$, thus $[F]\in \mathrm{WE}_{n,r}^d$.
\end{proof}

This allows us to give another description of $\mathrm{WE}_{n,r}^d$ as orbit closure of the action of $\mathrm{O}(n+1)$.

\begin{remark}\label{rmk: orbitclosure}
By \Cref{rmk: invariance} and \Cref{cor: join}, we have

$$\mathrm{WE}_{n,r}^d=\overline{\bigcup_{g\in \mathrm{O}(n+1)} g\cdot\left( \mathcal{J}( [x_0^d],\sigma_{r-1}^\circ(V^d_n)\cap (H_1\cap \cdots \cap H_n)\right))}.$$
We notice that this description is not so simple to work with as it involves intersecting hyperplanes with a dense locus in $\sigma_{r-1}(V_n^d)$. In the case of binary forms for $r \leq \frac{d+1}{2}$, we will provide sufficient conditions ensuring that
\begin{equation}\label{eq:closure}
\overline{\sigma_{r-1}^\circ(V_1^d)\cap H_1}=\sigma_{r-1}(V_1^d)\cap H_1.\end{equation}
To guarantee that \cref{eq:closure} holds, it is sufficient to observe that $\sigma_{r-1}(V_1^d)\cap H_1$ is irreducible (see \Cref{thm: irriducibilita'}) and that there exists an element $[F]\in \sigma_{r-1}^\circ(V_1^d)\cap H_1$ with $\rk(F)=r-1$ (see \Cref{prop:rankrbinary}). Indeed, under such assumptions, as the set $\{F\mid \rk (F)=r-1\}$ is dense in $\sigma_{r-1}(V_1^d)$, it remains dense in the induced topology in $\sigma_{r-1}(V_1^d)\cap H_1$. Consequently, for binary forms with $r \leq \frac{d+1}{2}$, we have: 
\begin{equation}\label{eq: WE orbita intersezione chiuso}
\mathrm{WE}_{1,r}^d=\overline{\bigcup_{g\in \mathrm{O}(2)} g\cdot\left( \mathcal{J}( [x_0^d],\sigma_{r-1}(V^d_1)\cap H_1\right))}.
\end{equation}

\end{remark}

\begin{corollary}\label{cor:equations2}
    Let $g\in \mathrm O(n+1)$, and let $L=g\cdot x_0$. Then $[F]\in \mathcal J([L^d],\sigma_{r-1}^\circ(V_n^d))$ has $L$ as an eigenvector if and only if $F\in{\Tilde{H_i}}$, where $\Tilde{H_i}$ is the hyperplane defined by the vanishing of the apolar action against $(g\cdot x_0)^{d-1}(g\cdot x_i)$.
\end{corollary}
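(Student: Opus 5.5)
The plan is to transport \Cref{cor: join} along the orthogonal transformation $g$, using the invariance of the Bombieri-Weyl product (equivalently, of the apolar pairing in degree $d$) under $\mathrm O(n+1)$. Write $F = L^d + G$ with $[G]\in\sigma^\circ_{r-1}(V_n^d)$; here the coefficient of $L^d$ may be taken to be $1$ after rescaling, exactly as in the setup of \Cref{thm:equationsx0}. Set $F' = g^{-1}\cdot F = x_0^d + g^{-1}\cdot G$. Since $\mathrm{O}(n+1)$ preserves the Veronese variety, hence $\sigma^\circ_{r-1}(V_n^d)$, we have $[g^{-1}\cdot G]\in\sigma^\circ_{r-1}(V_n^d)$. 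So $F'$ is in the situation of \Cref{cor: join}.

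By \Cref{rmk: invariance}, $\Eig(g\cdot F') = g\cdot\Eig(F')$. Therefore $[L]=[g\cdot x_0]\in\Eig(F)$ if and only if $[x_0]\in\Eig(F')$. By \Cref{cor: join}, the latter holds if and only if $\langle x_0^{d-1}x_i, g^{-1}\cdot G\rangle = 0$ for $i=1,\dots,n$.

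Next I would verify that $\langle g\cdot A, g\cdot B\rangle = \langle A,B\rangle$ for $g\in\mathrm O(n+1)$. By linearity it suffices to check this on powers: $\langle (gu)^d,(gv)^d\rangle = \langle gu,gv\rangle_\CC^d = \langle u,v\rangle_\CC^d$, since $g$ preserves the bilinear form. This invariance yields
\[
\langle x_0^{d-1}x_i, g^{-1}\cdot G\rangle = \langle (g\cdot x_0)^{d-1}(g\cdot x_i), G\rangle ,
\]
so the condition reads $G\in\tilde H_i$ for all $i$.

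Finally, I would pass from $G$ to $F$. We have $\langle (g x_0)^{d-1}(g x_i), L^d\rangle = \langle x_0^{d-1}x_i, x_0^d\rangle = 0$ for $i\ge 1$, so $\langle\cdot, L^d\rangle$ contributes nothing and $G\in\tilde H_i$ if and only if $F\in\tilde H_i$. By the identity $G(\partial)\circ F = d!\langle F,G\rangle$, this vanishing is the same as the vanishing of the apolar action against $(g\cdot x_0)^{d-1}(g\cdot x_i)$.

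The only delicate point is the orthogonal invariance of the pairing together with the equivariance of $\Eig$; both are immediate from the definitions. Points of the join that are not literally of the form $L^d+G$ (limits, or zero coefficient on $L^d$) are handled by reading the statement on $\{L^d+G\}$, as in \Cref{cor: join}.
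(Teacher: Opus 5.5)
Your proof is correct and takes the route the paper intends. The paper gives no separate argument: it derives the statement from \Cref{cor: join} together with the $\mathrm{O}(n+1)$-equivariance in \Cref{rmk: invariance}, and your check that $g$ preserves the Bombieri--Weyl product is exactly the detail needed to move the hyperplanes $H_i$ to $\tilde H_i$. Your closing caveat about limit points is unnecessary: applying the same transport directly to $g^{-1}\cdot F$ shows that $[L]\in\Eig(F)$ if and only if $F\in\tilde H_1\cap\cdots\cap\tilde H_n$, for every $F\in\sym^d(\CC^{n+1})$.
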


Let us consider $\PP^{\binom{n+d}{d}-1}=\PP(\sym^{d}(\CC^{n+1}))$ with coordinates $Y_{\alpha}$, where $\alpha=(\alpha_0,\dots,\alpha_n)$ is such that $|\alpha|=\alpha_0+\cdots+\alpha_n=d $, and $Y_\alpha$ is the coefficient of the monomial $x^\alpha=x_0^{\alpha_0}\cdots x_n^{\alpha_n}$. 
Notice that in this choice of coordinates the hyperplane $H_i$, defined by the vanishing of the apolar action against $x_0^{d-1}x_i$, is the hyperplane with equation $Y_{(d-1,0\dots,0,1,0,\dots,0)}=0$. We make the equation of the hyperplanes $\Tilde{H_i}$ for $i=1,\dots,n$ more explicit. 

Similar to the notation for monomials, for a vector $v\in \CC^{n+1}$ and $\alpha\in\NN^{n+1}$, we denote by $v^\alpha=\prod_{i=0}^n v_i^{\alpha_i}\in\CC$.
\begin{corollary}
    Let $g\in \mathrm O(n+1)$ with a matrix representation $g=(g_{i,j})_{i,j=0}^n$ and $L=g\cdot x_0$. Then the $\Tilde{H_i}$'s in \Cref{cor:equations2} have equation $$
    \sum_{|\alpha|=d}\sum_{\ell=0}^n\binom{d-1}{\alpha_0,\dots,\alpha_\ell-1,\dots,\alpha_n}g_0^{\alpha-e_\ell}g_{\ell i}Y_\alpha=0.
    $$
    Where $g_k$ represents the $k$-th column vector of the matrix $g$ and $e_i$ is the i-th vector in the standard basis of $\CC^{n+1}$.

\end{corollary}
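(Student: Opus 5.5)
The hyperplane $\Tilde{H_i}$ is by definition $\{F \mid (g\cdot x_0)^{d-1}(g\cdot x_i)(\partial)\circ F = 0\}$. Equivalently, by the lemma identifying the Bombieri-Weyl product with the apolar action, it is $\{F \mid \langle F, L^{d-1}M_i\rangle = 0\}$ with $L=g\cdot x_0$ and $M_i = g\cdot x_i$. So the plan is a direct computation: expand the differential operator $L^{d-1}M_i(\partial)$ in the monomial basis $\partial^\alpha$, then pair it with $F=\sum_{|\alpha|=d} Y_\alpha x^\alpha$.

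First, I write $L = \sum_{k} g_{k0}x_k$ and $M_i=\sum_\ell g_{\ell i}x_\ell$, using the convention that $g\cdot x_j$ is the linear form whose coefficient vector is the $j$-th column of $g$, so $g_0$ is the coefficient vector of $L$. By the multinomial theorem,
\[
L^{d-1}=\sum_{|\beta|=d-1}\binom{d-1}{\beta} g_0^{\beta}x^\beta .
\]
Multiplying by $M_i$ and reindexing with $\alpha=\beta+e_\ell$ gives
\[
L^{d-1}M_i=\sum_{|\alpha|=d}\Bigl(\sum_{\ell=0}^n \binom{d-1}{\alpha-e_\ell}g_0^{\alpha-e_\ell}g_{\ell i}\Bigr)x^\alpha ,
\]
with the convention that terms with $\alpha_\ell=0$ vanish.

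Next, I apply the differential operator to $F=\sum Y_\alpha x^\alpha$. Since $\partial^\alpha\circ x^{\alpha'}=\alpha!\,\delta_{\alpha\alpha'}$, the pairing $L^{d-1}M_i(\partial)\circ F$ equals
\[
\sum_{|\alpha|=d}\alpha!\,Y_\alpha\sum_\ell\binom{d-1}{\alpha-e_\ell}g_0^{\alpha-e_\ell}g_{\ell i}.
\]

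The one point needing care is reconciling this with the stated equation, which has no factor $\alpha!$. Note that $\alpha!\binom{d-1}{\alpha-e_\ell}=(d-1)!\,\alpha_\ell$, so the factor is not a global constant, and which form is correct depends on the coordinates. If $Y_\alpha$ denote coordinates with respect to the dual (divided-power) basis, equivalently coefficients of $F$ weighted so that the pairing is the plain sum of coordinate products, as happens when $\Tilde{H_i}$ is written as the linear functional given by the coefficients of $L^{d-1}M_i$, then the equation takes exactly the displayed form. I will check this against the paper's convention via the special case $g=\mathrm{id}$. There $g_0=e_0$, so $g_0^{\alpha-e_\ell}\neq0$ only when $\alpha-e_\ell=(d-1)e_0$, and $g_{\ell i}=\delta_{\ell i}$. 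The sum therefore collapses to the single term $Y_{(d-1)e_0+e_i}=0$, which matches the stated description of $H_i$. Since only vanishing matters, this confirms the formula up to the coordinate normalization, and I will state that normalization explicitly.
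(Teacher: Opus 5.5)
Your expansion follows the paper's proof: apply the multinomial theorem to $(g\cdot x_0)^{d-1}$, multiply by $g\cdot x_i$, and reindex by $\alpha=\beta+e_\ell$. Your version also avoids the index slips in the paper's display, where $x_i$ and $e_i$ should be $x_\ell$ and $e_\ell$. The two arguments part ways at the last step, and there you are right and the paper is not. The paper goes straight from the coefficients $G_\alpha=\sum_\ell\binom{d-1}{\alpha-e_\ell}g_0^{\alpha-e_\ell}g_{\ell i}$ of $L^{d-1}M_i$ to the equation $\sum_\alpha G_\alpha Y_\alpha=0$, i.e.\ it pairs coefficient vectors naively. The apolar action against $L^{d-1}M_i$, however, is $\sum_\alpha \alpha!\,G_\alpha Y_\alpha$, exactly as you computed.

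What does not work is your reconciliation. The test $g=\mathrm{id}$ cannot distinguish the two normalizations: only one multi-index survives, and its weight $\alpha!=(d-1)!$ is a nonzero constant. Likewise, ``only vanishing matters'' does not let you drop weights that vary with $\alpha$. In fact, with the convention the paper actually fixes ($Y_\alpha$ is the coefficient of $x^\alpha$), the displayed equation is not the equation of $\tilde H_i$. Take $n=1$, $d=2$, $g=\tfrac15\begin{bmatrix}3&-4\\4&3\end{bmatrix}$ and $F=(3x_0+4x_1)^2$, which clearly has $L$ as an eigenvector. Then $25\,LM_1$ has coefficients $(-12,-7,12)$ and $F$ has $(9,24,16)$. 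Up to the common factor $1/25$, the displayed sum is $-108-168+192=-84\neq 0$, while the $\alpha!$-weighted sum is $-216-168+384=0$.

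Since $\alpha!\binom{d-1}{\alpha-e_\ell}=(d-1)!\,\alpha_\ell$, the correct equation in coefficient coordinates is $\sum_{|\alpha|=d}\sum_{\ell}\alpha_\ell\,g_0^{\alpha-e_\ell}g_{\ell i}Y_\alpha=0$, i.e.\ $\langle\nabla F(g_0),g_i\rangle_\CC=0$. The displayed formula holds exactly when $Y_\alpha$ is read as the divided-power coordinate $\partial^\alpha\circ F=\alpha!\cdot(\text{coefficient of }x^\alpha)$. So do not present the formula as ``confirmed up to normalization''. Either adopt the divided-power coordinates as an explicit hypothesis, or replace the multinomial coefficient by $\alpha_\ell$; under the literal convention the statement is false and no proof can close it.
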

\begin{proof} 
We compute  the image $(g\cdot x_0)^{d-1}(g\cdot x_i)$ explicitly.  
    \begin{align*}
        (g\cdot x_0)^{d-1}(g\cdot x_i)&=\left(\sum_{\ell=0}^n g_{\ell,0}x_\ell\right)^{d-1}\left(\sum_{\ell=0}^n g_{\ell ,i}x_i\right)\\
        &= \left(\sum_{|\alpha|=d-1}\binom{d-1}{\alpha_0,\dots,\alpha_n}g_0^\alpha x^\alpha\right)\left(\sum_{\ell=0}^n g_{\ell, i}x_\ell\right)\\
        &=\sum_{|\alpha|=d-1}\sum_{\ell=0}\binom{d-1}{\alpha_0,\dots,\alpha_n}g_0^\alpha g_{\ell ,i}x^{\alpha+e_i}\\
        &=\sum_{|\alpha|=d}\sum_{\ell=0}\binom{d-1}{\alpha_0,\dots,\alpha_\ell - 1,\dots,\alpha_n}g_0^{\alpha-e_i} g_{\ell ,i}x^{\alpha}.
    \end{align*}
    Therefore, the hyperplane $\tilde{H}_i$ has equation \[
    \sum_{|\alpha|=d}\sum_{\ell=0}^n\binom{d-1}{\alpha_0,\dots,\alpha_\ell-1,\dots,\alpha_n}g_0^{\alpha-e_\ell}g_{\ell, i}Y_\alpha=0.\qedhere
    \]
\end{proof}

\subsection{{Dimension and irreducibility of critical Waring variety}}

We now aim to understand the dimension and irreducibility of $\mathrm{WE}_{n,r}^d$ by studying the orbit closure of $\mathcal{J}( [x_0^d],\sigma_{r-1}(V^d_n))\cap (H_1\cap \cdots \cap H_n))$ with respect to the action of $\mathrm{O}(n+1)$, see \Cref{rmk: orbitclosure}.

\begin{theorem}\label{proposition: dim WE rnc }
    Let $2\leq r\leq \frac{d+1}{2}$, then $\dim(\mathrm{WE}_{1,r}^d)=2(r-1)$. In particular, $\mathrm{WE}_{1,r}^d$ is codimension one subvariety of $\sigma_r(V_1^d)$.
\end{theorem}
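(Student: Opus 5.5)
We compute the dimension through the orbit-closure description \cref{eq: WE orbita intersezione chiuso} of \Cref{rmk: orbitclosure}, by counting parameters in an incidence correspondence. Since $r\le \frac{d+1}{2}$, the secant variety $\sigma_r(V_1^d)$ is nondefective, so $\dim\sigma_r(V_1^d)=2r-1$, and a generic rank-$r$ binary form is identifiable. It therefore suffices to show $\dim \mathrm{WE}_{1,r}^d=2r-2$; this gives codimension one in $\sigma_r(V_1^d)$.

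\textbf{Upper bound.} Consider the incidence variety
\[
\mathcal I=\{([F],[L])\mid [L]\in\PP^1,\ [F]\in \mathcal J([L^d],\sigma_{r-1}(V_1^d))\cap \tilde H_L\},
\]
where $\tilde H_L$ is the single hyperplane of \Cref{cor:equations2}; for binary forms, $n=1$. By \Cref{cor: join} and \Cref{cor:equations2}, $\mathrm{WE}_{1,r}^d$ is contained in the closure of the image of $\mathcal I$ under the first projection. Let $L=x_0$, using the $\mathrm O(2)$-equivariance of \Cref{rmk: invariance}. The base is $\PP^1$, of dimension $1$. By \Cref{thm:equationsx0} with $n=1$, in the parameter space $\PP^{2(r-1)-1}$ of the summands $\alpha_{i,j}$ the locus where $x_0$ is an eigenvector is a hypersurface $g_1=0$. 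Adding the free coefficient of $x_0^d$, and using that the abstract secant parametrization is generically finite for $r-1<r\le\frac{d+1}{2}$, the fibre over $[x_0]$ has dimension $(2(r-1)-1)-1+1 = 2r-3$. Equivalently, the fibre is the cone over $\sigma_{r-1}(V_1^d)\cap H_1$ with vertex $[x_0^d]$, which has dimension $(2r-3-1)+1$. Hence $\dim\mathcal I\le 2r-2$, and so $\dim\mathrm{WE}_{1,r}^d\le 2r-2$. One must also check that $\sigma_{r-1}(V_1^d)\not\subset H_1$; this holds because a generic rank-$(r-1)$ form has nonzero $x_0^{d-1}x_1$ coefficient.

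\textbf{Lower bound, and the main obstacle.} We need the projection $\mathcal I\to\PP(\sym^d\CC^2)$ to be generically finite, so that the image has dimension exactly $2r-2$. Take a generic point $([F],[L])$ of $\mathcal I$. By \Cref{prop:rankrbinary} we may assume $\rk F=r$ with $r\le\frac{d+1}{2}$. When $r<\frac{d+2}{2}$, the generator of $\mathrm{Ann}(F)$ of degree $r$ is squarefree and unique up to scalar, so $F$ is identifiable and $\W(F)$ is finite. Its eigenscheme is also finite by \Cref{rmk:algo_binary}. Therefore only finitely many $[L]$ can pair with a given $[F]$, and the fibres of the projection are finite.

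The real difficulty is showing that the generic point of $\mathcal I$ is of this kind, i.e.\ that the locus in the fibre hypersurface $\{g_1=0\}$ of decompositions with distinct points, together with $[x_0]$, is dense in some component. Otherwise a component consisting of degenerate configurations could dominate the count. For the dimension count itself it suffices that one component of the expected dimension $2r-3$ contains the good point exhibited in \Cref{prop:rankrbinary}. Since $g_1=0$ is a hypersurface, every component of it has dimension $2r-3$; at the good point the equation $g_1$ is nonsingular, because $\partial g_1/\partial\alpha_{i,1}=\alpha_{i,0}^{d-1}\neq0$ there. This yields a component of the right dimension whose generic member has rank $r$. Consequently the image has dimension $1+(2r-3)=2r-2$, which equals $\dim\sigma_r(V_1^d)-1$.
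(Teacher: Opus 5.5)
Your argument is correct and follows the paper's dimension count: $\mathrm{WE}_{1,r}^d$ is swept out by a one-parameter family of copies of the cone $Y=\mathcal J([x_0^d],\sigma_{r-1}(V_1^d))\cap H_1$, which has dimension $2r-3$, and the sweeping map is generically finite (the paper parametrizes the family by $\mathrm{O}(2)\times Y$, you by an incidence variety over $\PP^1$). The real difference is how you get finiteness and density: the paper computes the fibre of $\psi$ over the explicit point $z=x_0^d+\sum_{i=1}^{r-1}(x_0+\xi^ix_1)^d$ using identifiability and explicit orthogonal matrices, then invokes semicontinuity, whereas you take finiteness from the finiteness of $\Eig(F)$ (or $\W(F)$) for identifiable $F$ and use the point of \Cref{prop:rankrbinary} to single out a good component; as a result, your upper bound (by containment) and lower bound (via that component) never need the irreducibility of $\sigma_{r-1}(V_1^d)\cap H_1$, on which the paper's orbit-closure equality rests.
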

\begin{proof}
    {The strategy is: we express $\mathrm{WE}^d_{1,r}$ as the image of a map 
    $
        \psi:\mathrm{O}(2)\times Y\to \sigma_r(V_1^d),\ \psi(g,y)=g\cdot y,
    $
    then, we identify a specific element for which the fiber $\psi^{-1}(z)$ is zero-dimensional and conclude by semi-continuity.}
    
    Since $\sigma_{r-1}(V_1^d)\subsetneq\mathcal J([x_0^d], \sigma_{r-1}(V_1^d))$, then $\dim \mathcal J([x_0^d], \sigma_{r-1}(V_1^d)) = \dim \sigma_{r-1}(V_1^d)+1 = 2(r-1).$ Let $Y=H_1\cap \mathcal J([x_0^d], \sigma_{r-1}(V_1^d))$ We have $\dim(Y)=2(r-1)-1$.
    As mentioned, $\mathrm{WE}_{1,r}^d=\mathrm{im}(\psi)$. Thus, $$\dim \mathrm{WE}_{n,r}^d=\dim Y+\dim \mathrm{O}(2)-\dim \psi^{-1}(z),$$ where $z$ is a generic element in the image of $\psi$ and $\psi^{-1}(z)=\{(g,y)\in \mathrm{O}(2)\times Y\mid g\cdot y=z\}$. Let $\pi_2:\mathrm{O}(2)\times Y\to Y$ be the projection onto the second factor, so $$\dim \psi^{-1}(z)=\dim \pi_2(\psi^{-1}(z))+\dim \pi_2^{-1}(y)$$ for a generic $y\in \pi_2(\psi^{-1}(z))$. Notice $\pi_2(\psi^{-1}(z))=\{y\mid g\cdot y=z \text{ for some $g\in\mathrm{O}(2)$}\}$ and $\pi_2^{-1}(y)=\{(g,y)\mid g\cdot y=z\}$. We will show that both are zero-dimensional for a specific choice of $z$, and by semi-continuity, also the generic fibre of $\psi$ is zero-dimensional. 
    
    We fix $z=x_0^d+\sum_{i=1}^{r-1}(x_0+\xi^ix_1)^d\in Y\subset \mathrm{im}(\psi)$, for $\xi$ a fundamental $(r-1)$-root of unity, where the belonging follows from the proof of \Cref{prop:rankrbinary}.

    Since $y\in Y$, $y=x_0^d+\sum_{i=1}^{r-1}L_i^d$, and notice that $y\in \pi_2(\psi^{-1}(z))$ if and only if $y\in Y\cap \mathrm{O}(2)\cdot z$. By assumption, both $y$ and $z$ have rank $r\leq \frac{d+1}{2}$, both are identifiable, therefore $g\cdot\{ x_0,(x_0+\xi^ix_1)\}_{i=1}^{r-1}=\{\mu_0 x_0,\mu_iL_i\}_{i=1}^{r-1}$, where $\mu_0^d=\mu_i^d=1$. It follows that either $g\cdot x_0=\mu_0 x_0$ or $g\cdot (x_0+\xi^ix_1)=\mu_0x_0$ for some $i$. 

    If $g\cdot x_0=\mu_0 x_0$, then $g=\begin{bmatrix}
        \mu_0 &0\\
        0&\eta
    \end{bmatrix}$, with $\mu_0^2=\eta^2=1$, and we obtain finitely many such $y$'s. On the other hand, if $g\cdot \frac{1}{\|(1,\xi^i)\|}(x_0+\xi^ix_1)=\mu_0 x_0$, then 
    $$
    g=\frac{1}{\|(1,\xi^i)\|}\begin{bmatrix}
       \mu_0-\xi^{2i}&\xi^i\\
       -\xi^i&\mu_0-\xi^{2i}
    \end{bmatrix} \quad \text{ or } \quad g=\frac{1}{\|(1,\xi^i)\|}\begin{bmatrix}
       \mu_0-\xi^{2i}&\xi^i\\
       \xi^i&-\mu_0+\xi^{2i}
    \end{bmatrix}.
    $$
   It follows that for each $i=1,\dots, r-1$, there are at most finitely many such $y$'s in $Y\cap \mathrm{O}(2)\cdot z$. Finally, we notice that if $\|(1,\xi^i)\|=0$, i.e., it is isotropic, then there is no $g\in \mathrm{O}(2)$ such that $g\cdot x_0=\lambda (x_0+\xi^ix_1)$ for any $\lambda\in \CC$, and the finiteness argument still holds. Thus $\dim \pi_2(\psi^{-1}(z))=0$.

    We now focus on $\pi_2^{-1}(y)$: let $y=x_0^d+\sum_{i=1}^{r-1}L_i^d\in \pi_2(\psi^{-1}(z))$, since $g\cdot y=z$, and $y$ is identifiable, we have $g\cdot \{x_0,L_i\}_{i=1}^{r-1}=\{\mu_0x_0,\mu_i(x_1+\xi^ix_1)\}_{i=1}^{r-1}$, with $\mu_0^d=\mu_i^d=1$. Similarly, either $g\cdot x_0=\mu_0x_0$ or $g\cdot x_0=\frac{\mu_i}{\|(1,\xi^i)\|}(x_0+\xi^ix_1)$. For the first, again it follows that $g$ is diagonal with entries $\pm 1$. For the second, it follows that
  $$
 g=\frac{1}{\|(1,\xi^i)\|}\begin{bmatrix}
        1&\xi^i\\
        -\xi^i&1
    \end{bmatrix} \quad \text{ or }  \quad g=\frac{1}{\|(1,\xi^i)\|}\begin{bmatrix}
        1&\xi^i\\
        \xi^i&-1
    \end{bmatrix} ,$$  
    and there are finally many possibilities. Again, if $(1,\xi^i)$ is isotropic, there are no such matrices, and finiteness still holds.

   Hence $\psi^{-1}(z)$ is zero-dimensional, so also the generic fibre is zero-dimensional, and we conclude that $\dim(\mathrm{WE}_{1,r}^d)=\dim Y+\dim \mathrm{O}(2)=2(r-1)$.
    \end{proof}

   We now want to prove that the generic element in $\mathrm{WE}_{1,r}^d$ has rank $r$. To this end, let us first show that
   $\mathrm{WE}_{1,r}^d$ is irreducible.

\begin{lemma}\label{thm: irriducibilita'}
    Assume $2\leq r\leq \frac{d+1}{2}$, then $\sigma_r(V_1^d)\cap H_1$ is irreducible.
\end{lemma}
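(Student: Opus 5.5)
The plan is to realize $\sigma_r(V_1^d)\cap H_1$ as the image of an irreducible variety, which I would build from the standard \emph{secant bundle} description of $\sigma_r(V_1^d)$. Identify $\mathrm{Sym}^r(\PP^1)\cong\PP^r$ with the Hilbert scheme of length-$r$ subschemes $Z\subset\PP^1$. Since $r\leq \frac{d+1}{2}\leq d+1$, the image $\nu_d(Z)$ of any such $Z$ under the Veronese map spans a linear space $\langle \nu_d(Z)\rangle\cong\PP^{r-1}$; this holds also for non-reduced $Z$, because $I_Z$ is generated by a single form of degree $r\leq d+1$. These spans form a projective bundle $\pi:\PP(E)\to\PP^r$ with fibres $\PP^{r-1}$. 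The natural map $\phi:\PP(E)\to\PP(\sym^d(\CC^2))$ has image exactly $\sigma_r(V_1^d)$: it is closed and contains $\sigma_r^\circ(V_1^d)$. Moreover $\PP(E)$ is smooth and irreducible of dimension $2r-1$. Since $\phi$ is surjective onto $\sigma_r(V_1^d)$, we get $\sigma_r(V_1^d)\cap H_1=\phi(\phi^{-1}(H_1))$. So it suffices to prove that $\phi^{-1}(H_1)$ is irreducible.

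Next I would analyse $\phi^{-1}(H_1)$ fibrewise over $\PP^r$. For each $Z$, the fibre is $\langle\nu_d(Z)\rangle\cap H_1$, which is either a hyperplane $\PP^{r-2}$ of the fibre or the whole $\PP^{r-1}$. The second case happens only when $\langle\nu_d(Z)\rangle\subset H_1$. For a reduced point $[a:b]$, the coefficient of $x_0^{d-1}x_1$ in $(ax_0+bx_1)^d$ is $d\,a^{d-1}b$. Hence $\nu_d(p)\in H_1$ forces $p\in\{[1:0],[0:1]\}$. For non-reduced $Z$, the osculating spaces at points of $\PP^1\setminus\{[1:0],[0:1]\}$ are not contained in $H_1$ either, for the same reason. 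So the exceptional set $B=\{Z \mid \langle\nu_d(Z)\rangle\subset H_1\}$ consists of schemes supported on $\{[1:0],[0:1]\}$, which is a finite set. Over $\PP^r\setminus B$, $\phi^{-1}(H_1)$ is a $\PP^{r-2}$-bundle, so its closure $W_0$ is irreducible of dimension $r+(r-2)=2r-2$. The remaining part $\pi^{-1}(B)$ is a finite union of $\PP^{r-1}$'s, of dimension $r-1<2r-2$ because $r\geq 2$.

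Finally I would rule out $\pi^{-1}(B)$ contributing extra components. The hyperplane $H_1$ pulls back under $\phi$ to the zero locus of a nonzero section of $\mathcal O_{\PP(E)}(1)$. By Krull's principal ideal theorem, every irreducible component of $\phi^{-1}(H_1)$ has dimension at least $\dim\PP(E)-1=2r-2$. The pieces in $\pi^{-1}(B)$ have dimension $r-1<2r-2$, so they cannot be components and must lie in $W_0$. Hence $\phi^{-1}(H_1)=W_0$ is irreducible, and so is its image $\sigma_r(V_1^d)\cap H_1$.

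I expect the main obstacle to be the careful treatment of non-reduced $Z$. One must ensure that the span is still $\PP^{r-1}$, so that $\PP(E)$ is a genuine bundle and $\phi$ surjects onto the closed secant variety. One must also determine exactly which $Z$ have span inside $H_1$. If that becomes messy, a cruder bound suffices: the locus $B$ is a proper closed subset of $\PP^r$, since a general reduced $Z$ is not in $B$. Even then, one needs $\dim \pi^{-1}(B)<2r-2$, i.e.\ $\dim B\leq r-2$. I would establish this by noting that $Z\in B$ forces every point of the support of $Z$ into $\{[1:0],[0:1]\}$.
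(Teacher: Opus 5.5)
Your proof is correct, but it takes a different route from the paper.

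\textbf{The paper's route.} It argues algebraically: $\sigma_r(V_1^d)$ is cut out by maximal minors of a catalecticant matrix. This matrix is $1$-generic, and a theorem of Eisenbud on linear sections of $1$-generic determinantal ideals shows that the ideal of maximal minors stays prime after setting $a_1=0$.

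\textbf{Your route.} You pull $H_1$ back to the secant bundle $\PP(E)\to\PP^r=\mathrm{Hilb}^r(\PP^1)$. There it becomes a $\PP^{r-2}$-subbundle away from the finite set $B$. Krull's principal ideal theorem on the smooth, irreducible $(2r-1)$-dimensional $\PP(E)$ then shows that the $(r-1)$-dimensional set $\pi^{-1}(B)$ cannot be a component.

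Two minor touch-ups:
\begin{enumerate}
\item The inclusion $\phi(\PP(E))\subseteq\sigma_r(V_1^d)$ comes from irreducibility of $\PP(E)$ and density of the part over reduced $Z$, not from closedness.
\item $B$ can be pinned down exactly: $\langle\nu_d(Z)\rangle\subset H_1$ if and only if the degree-$r$ generator of $I_Z$ divides $\partial_0^{d-1}\partial_1$. This leaves only two schemes.
\end{enumerate}

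\textbf{What each route buys.} The paper's argument gives a stronger, scheme-theoretic conclusion: the ideal is prime, so the section is also reduced. The price is relying on fairly deep results about $1$-generic matrices and which linear sections preserve primeness.

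Your argument is self-contained and gives only set-theoretic irreducibility. That is all that is used afterwards, in \Cref{rmk: orbitclosure} and \Cref{thm: irriducibilita'WE}. It is also uniform in $2\le r\le\frac{d+1}{2}$ and uses nothing specific about $H_1$. So it shows $\sigma_r(V_1^d)\cap H$ is irreducible for every hyperplane $H$ in that range, since a nonzero element of $\sym^d(\CC^2)^*$ has only finitely many degree-$r$ divisors up to scalars.
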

\begin{proof}
We recall that the equations of secant varieties of rational normal curves are given by the $(r+1)$-minors of catalecticant matrices (cf. for instance \cite[Section 0.1]{IarrobinoKanev}). Moreover, it is well known that one can reduce to consider only the case of maximal minors of the catalecticant matrix of size $(d-r+1)\times(r+1)$, see \cite[Lemma 2.3]{Peskine}. To prove the irreducibility of the  hyperplane section, we first notice that by \cite[Proposition 4.2]{EisenbudLinearSec} the space of catalecticant matrices $$\begin{bmatrix}
     a_0& a_1 & a_2&\cdots & a_{r}\\
     a_1 & a_2 & a_3& \dots & a_{r+1}\\
     \vdots & & & &\vdots \\
     a_{d-r}& a_{d-r+1}& \dots &\dots & a_d
 \end{bmatrix}$$ are $1$-generic. The linear section given by $H_1$ corresponds to considering $a_1=0$. We conclude by \cite[Theorem 1-(ii)]{eisenbud} that after setting $a_1=0$, the ideal of maximal minors  is still prime.
\end{proof}

\begin{theorem}\label{thm: irriducibilita'WE}
    Let $1\leq r\leq \frac{d+1}{2}$. The variety $\mathrm{WE}_{1,r}^d$ is irreducible.
\end{theorem}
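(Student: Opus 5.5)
We will use the orbit-closure description \eqref{eq: WE orbita intersezione chiuso} from \Cref{rmk: orbitclosure} and show that $\mathrm{WE}_{1,r}^d$ is the closure of the image of an irreducible variety under a morphism. The small cases are handled separately. For $r=1$, $\mathrm{WE}_{1,1}^d$ is the closure of the set of $L^d$ with $[L]\in\Eig(L^d)$, which is the whole rational normal curve, hence irreducible. For $r=2$, $\sigma_1(V_1^d)\cap H_1$ is the finite set of $[M^d]$ with $\langle x_0^{d-1}x_1,M^d\rangle=0$, i.e.\ $M\in\{x_0,x_1\}$ up to scalar. The component with $M=x_0$ gives nothing new, so we work with the join $\mathcal J([x_0^d],[x_1^d])$, a line, which is irreducible. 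For $3\le r\le \frac{d+1}{2}$ we have $2\le r-1\le\frac{d+1}{2}$, so \Cref{thm: irriducibilita'} applies with $r-1$ in place of $r$ and gives that $\sigma_{r-1}(V_1^d)\cap H_1$ is irreducible.

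The first step is that $Y:=\mathcal J([x_0^d],\sigma_{r-1}(V_1^d)\cap H_1)$ is irreducible. The join of two irreducible varieties is irreducible, as recalled in the remark after the definition of secant varieties. Alternatively, $Y$ is the closure of the image of the irreducible variety $\PP^1\times([x_0^d]\times(\sigma_{r-1}(V_1^d)\cap H_1))$, minus the diagonal locus, under the rational map $(t,p,q)\mapsto tp+(1-t)q$. One point needs checking: $[x_0^d]$ does not lie in $\sigma_{r-1}(V_1^d)\cap H_1$ in a way that would make the join degenerate. This is harmless, since irreducibility holds regardless.

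The second step considers the action morphism $\psi:\mathrm{O}(2)\times Y\to \PP(\sym^d(\CC^2))$, $(g,y)\mapsto g\cdot y$. By \eqref{eq: WE orbita intersezione chiuso}, $\mathrm{WE}_{1,r}^d=\overline{\psi(\mathrm{O}(2)\times Y)}$. The obstacle is that $\mathrm{O}(2)$ is \emph{not} connected: it has two components, $\mathrm{SO}(2)$ and $\mathrm{SO}(2)\cdot s$ with $s=\mathrm{diag}(1,-1)$. We handle this by showing that $Y$ is stable under $s$. Indeed, $s\cdot x_0=x_0$, and $s$ preserves $\sigma_{r-1}(V_1^d)$. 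It also preserves $H_1$, because $s\cdot(x_0^{d-1}x_1)=-x_0^{d-1}x_1$, and $s$ is orthogonal, so the Bombieri–Weyl pairing against $x_0^{d-1}x_1$ changes only by a sign. Hence $s\cdot Y=Y$, and therefore
\[
\psi(\mathrm{O}(2)\times Y)=\psi(\mathrm{SO}(2)\times Y).
\]
Since $\mathrm{SO}(2)\cong\CC^*$ is irreducible, $\mathrm{SO}(2)\times Y$ is irreducible. Its image under the morphism $\psi$ is then irreducible, and so is the closure of that image.

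The main point to verify carefully is the validity of \eqref{eq: WE orbita intersezione chiuso} itself, namely that replacing $\sigma_{r-1}^\circ$ by the closed $\sigma_{r-1}$ does not enlarge the closure. This is exactly what \Cref{rmk: orbitclosure} establishes, combining \Cref{thm: irriducibilita'} with the rank-$(r-1)$ element produced in \Cref{prop:rankrbinary}. We take that remark as given. A second point is that a general element of the image lies in $\sigma_r^\circ$ and has $x_0$ in its Waring locus. This follows from \Cref{prop:rankrbinary} together with density, so the closure coincides with $\mathrm{WE}_{1,r}^d$ rather than merely containing it.
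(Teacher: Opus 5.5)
Your proof is correct and follows the paper's route. Irreducibility of $\sigma_{r-1}(V_1^d)\cap H_1$ comes from the lemma (applied with $r-1\ge 2$). This gives irreducibility of the join $Y$ with $[x_0^d]$, and then of its $\mathrm{O}(2)$-orbit closure via the description in the remark. The cases $r=1,2$ are handled directly.

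The one real difference is in your favour. The paper finishes by calling $\mathrm{O}(2)$ ``an irreducible group'', which is false, since $\mathrm{O}(2)$ has the two components $\det=\pm1$. You check that $s=\mathrm{diag}(1,-1)$ fixes $[x_0^d]$ and preserves both $\sigma_{r-1}(V_1^d)$ and $H_1$, so $\mathrm{O}(2)\cdot Y=\mathrm{SO}(2)\cdot Y$. This is exactly what is needed to make that step valid. For $r=2$, the paper instead reduces to $x_0^d+\lambda x_1^d$ and cites irreducibility of the odeco variety. Your line $\langle [x_0^d],[x_1^d]\rangle$, together with the same $\mathrm{SO}(2)$ reduction, covers this case uniformly.

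There are two harmless slips:
\begin{itemize}
\item $[x_0^d]$ does lie in $\sigma_{r-1}(V_1^d)\cap H_1$. The join is then a cone with that vertex, which is still irreducible.
\item A general element $g\cdot y$ of the image has $g\cdot x_0$, not $x_0$, in its Waring locus. This point is in any case already contained in the equality you take from the remark.
\end{itemize}
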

\begin{proof}
     If $r\geq 3$, notice that, $Y=\mathcal J([x_0^d],\sigma_{r-1}(V_1^d))\cap H_1 =\mathcal J([x_0^d],\sigma_{r-1}(V_1^d)\cap H_1)${: indeed, $[x_0^d] \in H_1$, thus a line $\langle [x_0^d],[F]\rangle \subset H_1$ if and only if $[F] \in H_1$}. Since the join of irreducible varieties is still irreducible, $Y$ is irreducible by \Cref{thm: irriducibilita'}. Moreover, $\mathrm{WE}_{1,r}^d$ is the orbit closure of the action of $\mathrm{O}(2)$, an irreducible group, on $Y$, so it is also irreducible.

     For the case $r=2$, notice that if $F=x_0^d+(a_0x_0+a_1x_1)^d$ and has $x_0^d$ as eigenvector, then by \Cref{prop:essential_var}, $a_0a_1=0$, so, assuming $F$ has rank $2$, we have $F\in\{x_0^d+\lambda x_1^d\mid \lambda\in \CC^\ast\}$. The orbit closure of these polynomials with respect to $\mathrm{O}(2)$ is the variety of odeco polynomials, which is irreducible. Lastly, for $r=1$, $\mathrm{WE}_{1,1}^d=V_{1}^d$.
\end{proof}
Combining  \Cref{prop:rankrbinary} and \Cref{thm: irriducibilita'WE} provides the following.
\begin{corollary}
Let $1\leq r\leq \frac{d+1}{2}$. The generic element of $\mathrm{WE}_{1,r}^d$ is a rank-$r$ tensor.
\end{corollary}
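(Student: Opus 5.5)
The plan is to combine irreducibility of $\mathrm{WE}_{1,r}^d$ (\Cref{thm: irriducibilita'WE}) with the existence of a single rank-$r$ element supplied by \Cref{prop:rankrbinary}, and then use that "rank exactly $r$" is a Zariski-open condition inside $\sigma_r(V_1^d)$ in the subgeneric range.

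First I will dispose of the small cases. For $r=1$, $\mathrm{WE}_{1,1}^d=V_1^d$ and every element has rank one. For $r=2$, the proof of \Cref{thm: irriducibilita'WE} shows that $\mathrm{WE}_{1,2}^d$ is the orbit closure of $\{x_0^d+\lambda x_1^d\mid\lambda\in\CC^\ast\}$. These generators have rank $2$, and the $\mathrm{O}(2)$-action preserves rank. So the orbit is a dense constructible subset of rank-$2$ forms, and the generic element has rank $2$.

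Now take $3\le r\le \frac{d+1}{2}$. By definition, $\mathrm{WE}_{1,r}^d\subseteq\sigma_r(V_1^d)$. Inside $\sigma_r(V_1^d)$, the forms of rank at most $r-1$ lie in $\sigma_{r-1}(V_1^d)$, which is closed. I also need that the points of $\sigma_r(V_1^d)\setminus\sigma_{r-1}(V_1^d)$ whose rank exceeds $r$ lie in a proper closed subset. By Sylvester's description recalled before \Cref{prop:binary rank 3}, these are exactly the forms whose minimal-degree apolar generator $G_1$, of degree $\le r$, is not square-free. I expect this to be the main obstacle: showing that this set is closed, namely that it is cut out by a discriminant-type condition on the kernel of the catalecticant. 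I would handle it by using $r\le\frac{d+1}{2}$. Then on $\sigma_r\setminus\sigma_{r-1}$ the degree-$r$ part of the kernel of the catalecticant is one-dimensional and varies algebraically, so vanishing of its discriminant is a closed condition. Hence the set $U$ of points of $\sigma_r(V_1^d)$ of rank exactly $r$ is open in $\sigma_r(V_1^d)$.

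Therefore $U\cap\mathrm{WE}_{1,r}^d$ is open in $\mathrm{WE}_{1,r}^d$. It is nonempty: the form $x_0^d+\sum_{i=1}^{r-1}(x_0+\xi^i x_1)^d$ from \Cref{prop:rankrbinary} has rank $r$, has $[x_0]\in\W(F)\cap\Eig(F)$, and so lies in $\mathrm{WE}_{1,r}^d$. Since $\mathrm{WE}_{1,r}^d$ is irreducible by \Cref{thm: irriducibilita'WE}, a nonempty open subset is dense, and the generic element of $\mathrm{WE}_{1,r}^d$ has rank $r$.
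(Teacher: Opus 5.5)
Your proof is correct and follows the paper's argument, which combines the irreducibility of $\mathrm{WE}_{1,r}^d$ (\Cref{thm: irriducibilita'WE}) with the explicit rank-$r$ element of \Cref{prop:rankrbinary}. You also spell out two points the paper leaves implicit: why rank exactly $r$ is an open condition on $\sigma_r(V_1^d)$ for $r\le\frac{d+1}{2}$ (via Sylvester's theorem and the one-dimensional catalecticant kernel), and the case $r\le 2$. Treating $r\le 2$ separately is actually needed, since for $r=2$ the root-of-unity example gives $\sum_{i=1}^{r-1}\xi^i=1\neq 0$.
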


In the general case, with $n\geq 2$, we provide a lower bound for the dimension.

\begin{theorem}\label{teo:dimWEn}
    Let $1\leq r\leq n$, $d\geq 3$. Then $ \dim (\mathrm{WE}_{n,r+1}^d)\geq r(n+1)$. 
\end{theorem}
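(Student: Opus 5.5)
We mimic the strategy of \Cref{proposition: dim WE rnc }. Let
\[
Y=\mathcal J([x_0^d],\sigma_r(V_n^d))\cap H_1\cap\cdots\cap H_n .
\]
By \Cref{rmk: orbitclosure} and \Cref{cor: join}, $\mathrm{WE}_{n,r+1}^d$ contains the image of
\[
\psi:\mathrm{O}(n+1)\times Y\to \PP(\sym^d(\CC^{n+1})),\qquad \psi(g,y)=g\cdot y .
\]
Thus
\[
\dim \mathrm{WE}_{n,r+1}^d\ \geq\ \dim Y+\dim\mathrm{O}(n+1)-\dim\psi^{-1}(z)
\]
for generic $z$ in the image. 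Rather than computing $\dim Y$ in general, we exhibit an explicit irreducible family inside $Y$ and bound the fibres of the restricted map.

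For the family we use the essential-variable situation of \Cref{prop:essential_var}. Since $r\leq n$, consider
\[
F=x_0^d+\sum_{i=1}^{r}L_i^d, \qquad L_i\in\langle x_1,\dots,x_n\rangle .
\]
This satisfies $g_1=\dots=g_n=0$ in \Cref{thm:equationsx0}, because all $\alpha_{i,0}=0$, so $x_0$ is an eigenvector. For generic such $L_i$, the forms $x_0,L_1,\dots,L_r$ are linearly independent. Hence $F$ has $r+1$ essential variables and rank $r+1$, and $[x_0]\in\W(F)$. The family $Z$ of such $F$ has dimension $rn$: each $[L_i]$ moves in $\PP^{n-1}$ and carries one scaling.

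The key step is to show that the $\mathrm{O}(n+1)$-saturation of $Z$ has dimension at least $rn+r=r(n+1)$. Equivalently, the orbit map moves $Z$ in at least $r$ independent directions transversal to $Z$. The forms in $Z$ are exactly the rank-$(r+1)$ forms with $r+1$ essential variables whose decomposition contains a summand orthogonal to all the others. Rotating $x_0$ away from the hyperplane $x_0^\perp$ changes the distinguished direction. Concretely, $\mathrm{O}(n+1)\cdot Z$ consists of forms $M^d+\sum_{i=1}^r L_i^d$ with $M$ non-isotropic and $\langle M,L_i\rangle_\CC=0$ for all $i$.

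We then count parameters directly in this description:
\begin{itemize}
\item $M$ contributes $n$ parameters, counted projectively and up to the $d$-th-root scaling absorbed into the coefficient (or $n+1$ affinely with the scaling);
\item each $L_i\in M^\perp\cong\CC^n$ contributes $n$ parameters.
\end{itemize}
This gives $n+rn$ affine parameters, minus $1$ for projectivization, which is at least $r(n+1)$ once $n\geq r+1$. For the boundary case $r=n$ the naive count gives $n+n^2-1=r(n+1)-1$. So we must also use the freedom of a non-orthogonal perturbation: by \Cref{thm:equationsx0}, the eigenvector conditions impose only $n$ equations, not the full orthogonality. We therefore enlarge the family to generic solutions of $g_1,\dots,g_n$ near $Z$ and use the codimension-$n$ statement there.

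Finiteness of the generic fibre follows as before. The form $F$ has $r+1\leq n+1$ linearly independent summands and $d\geq3$, so by classical identifiability of linearly independent decompositions (Kruskal-type, or directly via apolarity) its decomposition is unique. Hence the parametrization $(M,L_1,\dots,L_r)\mapsto F$ has finite fibres, namely permutations and $d$-th roots of unity.

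The main obstacle is the dimension count itself: making sure the parametrization is generically finite, and handling the case $r=n$, where the orthogonal-summand family alone falls one short. There I would first try to use \Cref{thm:equationsx0} at a point of $Z$: check that the Jacobian of $(g_1,\dots,g_n)$ has full rank $n$ there, so that the solution variety has codimension exactly $n$ locally, giving dimension $(n+1)r-1-n$ in the parameter space. Then add the $n$ dimensions of the orbit of $[x_0]$, the one extra scaling of $x_0^d$, and verify generic finiteness of $\psi$ at that point. That yields at least $r(n+1)$.
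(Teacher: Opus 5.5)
Your family is well chosen, but as written the proposal does not prove the case $r=n$. The count that makes this case look deficient is wrong, and the repair you propose for it would fail.

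\textbf{The count.} You give $M$ only $n$ parameters, which is already a projective count, and then subtract $1$ again for projectivization. The correct count goes as follows. The incidence $\{(M,L_1,\dots,L_r)\mid \langle M,M\rangle_\CC\neq 0,\ L_i\in M^\perp\}$ is irreducible of dimension $(n+1)+rn$. The map $(M,L_1,\dots,L_r)\mapsto M^d+\sum_i L_i^d$ is generically finite, because decompositions with linearly independent summands are identifiable ($d\geq 3$, $r+1\leq n+1$). Hence the saturation $\mathrm{O}(n+1)\cdot Z$, viewed in $\PP(\sym^d(\CC^{n+1}))$, has dimension $rn+n=n(r+1)$. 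This also matches your own $\dim Z=rn$ plus the $n$-dimensional orbit of $[x_0]$. Since $r\leq n$, we get $n(r+1)-r(n+1)=n-r\geq 0$, so there is no boundary case.

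\textbf{The fallback.} Your proposed repair cannot work. On $Z$ all $\alpha_{i,0}$ vanish, and each $g_j=\sum_{i}\alpha_{i,0}^{d-1}\alpha_{i,j}$ lies in $(\alpha_{1,0},\dots,\alpha_{r,0})^{d-1}$ with $d-1\geq 2$. So the Jacobian of $(g_1,\dots,g_n)$ is identically zero along $Z$; it does not have rank $n$. Moreover, $\{\alpha_{1,0}=\dots=\alpha_{r,0}=0\}$ is a linear space of codimension $r$ contained in $V(g_1,\dots,g_n)$, so the codimension-$n$ computation of \Cref{thm:equationsx0} gives no information there. Fix the count and delete the perturbation step.

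\textbf{Comparison with the paper.} With that correction, your argument is a complete proof by a different route.
\begin{itemize}
\item The paper works with the whole section $\mathcal J([x_0^d],\sigma_r^\circ(V_n^d)\cap H)$. It gets dimension at least $r(n+1)-n$ from Alexander--Hirschowitz non-defectivity and cutting by the $n$ hyperplanes $H_i$. It then computes that the fibre of $\psi$ over the special point $z=\sum_{i=0}^r x_i^d$ has dimension $\dim\mathrm{O}(n)$, and concludes by upper semicontinuity of fibre dimension.
\item You restrict to the odeco-type sublocus, which contains the paper's $z$. Membership in $\mathrm{WE}^d_{n,r+1}$ is immediate there: each such $F$ has $r+1$ essential variables, hence rank $r+1$, and $M^{d-1}\circ(F-M^d)=0$. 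Generic finiteness is immediate from identifiability.
\end{itemize}
Your route avoids Alexander--Hirschowitz and the semicontinuity argument. It also gives the sharper bound $\dim\mathrm{WE}^d_{n,r+1}\geq n(r+1)$, which is strictly better than $r(n+1)$ when $r<n$.

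Your opening set-up with $Y$ and $\psi$ then becomes superfluous. Note also that with the closed $\sigma_r$ in place of $\sigma_r^\circ$, the inclusion $\psi(\mathrm{O}(n+1)\times Y)\subseteq\mathrm{WE}^d_{n,r+1}$ would itself need an argument.
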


\begin{proof}

    Set $H=\cap_{i=1}^nH_i$. Let $Y=\mathcal J([x_0^d],\sigma_r^\circ(V_n^d)\cap H)$ and let $\psi:\mathrm{O}(n+1)\times Y\to \mathrm{WE}_{n,r+1}^d\subset \sigma_{r+1}(V_n^d)$, and $\pi_2:\mathrm{O}(n+1)\times Y\to Y$. Notice that under our numerical assumptions, by Alexander-Hirschowitz Theorem, $\sigma_r(V_n^d)$ is non-defective, thus we have $\dim \sigma_r(V^d_n) = r(n+1)-1$. Moreover since $\sigma_r(V_n^d)$ is not a cone,  $\dim \mathcal{J}([x_0^d],\sigma_r^\circ(V_n^d)) = r(n+1)$. Finally, since $\sigma_r^\circ(V_n^d)\cap H\neq \emptyset$, it follows $\dim Y \geq r(n+1)-n$. By using the classical theorem on the dimension of the generic fibre twice, following the same strategy as in \Cref{proposition: dim WE rnc }, we have that
    \begin{align*}
        \dim(\mathrm{WE}_{n,r+1}^d)& = \dim Y+\dim \mathrm{O}(n+1)-\dim \psi^{-1}(z)\\
        & = \dim Y+\dim \mathrm{O}(n+1)-\dim \pi_2(\psi^{-1}(z))-\dim \pi_2^{-1}(y),
    \end{align*}
    where $z$ is generic in $\mathrm{WE}_{n,r+1}^d$ and $y$ is generic in $\pi_2(\psi^{-1}(z))$.
    
    Since we want to employ the semi-continuity of the dimension of the fibre, we fix $z=\sum_{i=0}^rx_i^d$, then $\pi_2(\psi^{-1}(z))=\{y\in Y\mid g\cdot y=z \text{ for some $g\in \mathrm{O}(n+1)$}\} = (\mathrm{O}(n+1)\cdot z)\cap Y$.
    We first observe that $\mathrm{O}(n+1)\cdot z=\{y=\sum_{i=0}^r v_i^d\mid \{v_0,\dots,v_r\} \text{ is orthonormal}\}$, thus if $y\in(\mathrm{O}(n+1)\cdot z)\cap Y$, then also $v_i=x_0$ for some $i=0,\dots,r$, say $y=x_0^d+\sum_{i=1}^r v_i^d$. Moreover, since $\langle v_i,x_0\rangle=0$, it follows $v_i\in \CC[x_1,\dots,x_n]$. 
    Thus, $(\mathrm{O}(n+1)\cdot z)\cap Y = \mathrm{O}_{n,r}$, the space of $n \times r$-matrices whose columns are orthonormal. Note that this has dimension $\dim\mathrm O(n)-\dim \mathrm{O}(n-r)$: indeed, the map $\mathrm O(n) \rightarrow \mathrm O_{n,r}$ selecting the first $r$ columns is surjective and has generic fibre isomorphic to $\mathrm O(n-r)$.

    We have $\pi_2^{-1}(y)=\{g\in\mathrm O(n+1)\mid g\cdot y=z\}$, and notice $g\in \pi_2^{-1}(y)$ if and only if $g^{-1}\cdot z=y$, so it is equivalent to compute the dimension of $\{g\in \mathrm{O}(n+1)\mid g\cdot z=y\}$. Let $y=x_0^d+\sum_{i=1}^r v_i^d\in Y$, since $y$ is also in  $\mathrm O(n+1)\cdot z$, we have that $v_i\in \CC[x_1,\dots,x_n]$ and $\{x_0,v_1,\dots,v_r\}$ is orthonormal by the previous paragraph, in particular $y$ is identifiable. It follows that $g\cdot \{x_0,\dots,x_r\}=\{\mu_0x_0,\mu_1v_1,\dots, \mu_rv_r\}$, where $\mu_i^d=1$. Let $\{C_0,\dots,C_r\}=\{\mu_0x_0,\mu_1v_1,\dots, \mu_rv_r\}$, considering them as column vectors we have
    $$
    g=\begin{bmatrix}
        C_0&\cdots &C_r &B
    \end{bmatrix}
    $$
    where $B$ is a $(n+1)\times(n-r)$-matrix. Since $g^tg=I_{n+1}$, we have $$
        g^tg=\begin{bmatrix}
        I_{r+1}&C^tB\\
        B^tC&B^tB
    \end{bmatrix},
    $$
    thus $B^tC=0$ and $B^tB=I_{n-r}$, where $C=\begin{bmatrix}
        C_0&\cdots &C_r
    \end{bmatrix}$,  then the columns of $B$ are an orthonormal basis of $\langle x_0,v_1,\dots,v_r\rangle^\perp$,  so $\pi_2^{-1}(y)\cong \mathrm{O}(n-r)$. 
    
    Since $z$ is specific, we conclude by semi-continuity on the dimension of the fibre that 
    \begin{align*}
        \dim \mathrm{WE}_{n,r+1}^d&\geq \dim Y+\dim \mathrm{O}(n+1)-\dim \mathrm O(n-r) -(\dim (\mathrm O(n))-\dim (\mathrm O(n-r)))\\
        &\geq r(n+1)-n + {n+1\choose 2}-{n\choose 2}=r(n+1). \qedhere
    \end{align*}
\end{proof}

{The bound obtained in \Cref{teo:dimWEn} is sharp, since it is an equality for $n=1$, see \Cref{proposition: dim WE rnc }.}

\section{On eigenvectors of binary forms that increase Waring rank}\label{sec: grow}

The previous sections were devoted to understanding when eigenvectors can be used to decrease the Waring rank. A natural converse question is to understand when an eigenvector increases the Waring rank. 

For binary forms of subgeneric Waring rank, as a consequence of \Cref{proposition: dim WE rnc }, we know that generically the support of the eigenscheme is contained in the forbidden locus. An interesting question is to understand if there are special elements in the eigenscheme, and more generally in the forbidden locus, that increase the rank of the given form.
 \cite{SC10} shows that, in the case of real binary cubics, eigenvectors can actually increase the rank, even above the generic one. In this section we show that for generic $[F]\in\sigma_r(V_1^d)$, with $r\leq (d+1)/2$, all eigenvectors increase the Waring rank. The crucial observation is that in these cases, \textit{all} elements from the forbidden locus increases the rank. While this fact is known to specialists in the subgeneric setting, we include a proof for completeness.

\begin{lemma}\label{prop: subgenrkgrow}
    Let $d\geq 3$, $r<\frac{d+1}{2}$, and $F\in \sym^d(\CC^2)$ such that $\rk(F)=r$. For every $[L_0]\in \mathcal F(F)$ and every $\lambda\in\CC\setminus\{0\}$, we have $\rk(F+\lambda L_0^d)=r+1$.
\end{lemma}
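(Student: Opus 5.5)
Our plan is to use the structure of the apolar ideal of binary forms recalled before \Cref{prop:binary rank 3}, together with Sylvester's criterion. Since $\rk(F)=r<\frac{d+1}{2}$, we have $\mathrm{Ann}(F)=(G_1,G_2)$ with $\deg G_1=r$, $G_1$ square-free, and $\deg G_2=d+2-r>r$. In this regime $F$ is identifiable, so $\W(F)$ is exactly the set of roots of $G_1$. Concretely, write $F=\sum_{i=1}^r L_i^d$ with the $[L_i]$ pairwise distinct. Then $G_1=\prod_i L_i^\perp$, and every $[L_0]\in\mathcal F(F)$ satisfies $[L_0]\neq[L_i]$ for all $i$.

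\emph{Upper bound.} Set $F'=F+\lambda L_0^d=\sum_{i=1}^r L_i^d+\lambda L_0^d$. This is a decomposition of length $r+1$, so $\rk(F')\leq r+1$.

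\emph{Lower bound.} We argue by contradiction and suppose $\rk(F')\leq r$. Apolarity shows that $G_1L_0^\perp$ annihilates $F'$. This form has degree $r+1$ and is square-free, because $[L_0]\notin\{[L_i]\}$. Now consider $\mathrm{Ann}(F')=(G_1',G_2')$ with $\deg G_1'+\deg G_2'=d+2$.
\begin{itemize}
\item If $\rk(F')\leq r<\frac{d+1}{2}$, then $\deg G_1'=\rk(F')\leq r$ and $G_1'$ is square-free. Hence $\deg G_2'\geq d+2-r>r+1$.
\item So every element of $\mathrm{Ann}(F')$ of degree $r+1$ is a multiple of $G_1'$. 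In particular $G_1'\mid G_1L_0^\perp$.
\item The roots of $G_1'$ are therefore a subset $S$ of $\{[L_0],[L_1],\dots,[L_r]\}$ with $|S|\leq r$. By apolarity, $F'$ lies in the span of the $d$-th powers of the points of $S$.
\end{itemize}
This gives two decompositions of $F'$, namely $\sum_{i=1}^r L_i^d+\lambda L_0^d=\sum_{P\in S}c_P P^d$. Subtracting, we obtain a linear relation among $L_0^d,\dots,L_r^d$. Since $r+1\leq d+1$, these $r+1$ distinct points of the rational normal curve have linearly independent $d$-th powers, so the relation must be trivial. Every point in $\{[L_0],\dots,[L_r]\}\setminus S$ must then have coefficient zero on the left-hand side. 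But all those coefficients are nonzero: each $L_i^d$ appears with coefficient $1$, and $L_0^d$ with coefficient $\lambda\neq0$. This contradicts $|S|\leq r$, so $\rk(F')=r+1$.

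The only delicate step is the degree bookkeeping that forces $G_1'$ to divide $G_1L_0^\perp$. This is where the strict inequality $r<\frac{d+1}{2}$ is used, via $d+2-r>r+1$. Everything else is routine linear independence of distinct points on the rational normal curve.
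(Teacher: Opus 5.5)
Your proposal is correct and follows the paper's proof essentially step by step. In both, the square-free form $L_0^\perp L_1^\perp\cdots L_r^\perp$ of degree $r+1$ annihilates $F+\lambda L_0^d$, the degree count $d+2-r>r+1$ forces the low-degree generator of the apolar ideal to divide it, and linear independence of $r+1\le d+1$ distinct points of the rational normal curve gives the contradiction. The differences are harmless: you treat $\rk(F+\lambda L_0^d)\le r$ uniformly rather than first reducing to rank exactly $r$, and $[L_0]\neq[L_i]$ already follows from the definition of $\W(F)$ without invoking identifiability.
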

\begin{proof}
    Let $F=\sum_{i=1}^rL_i^d$. Suppose $r< \frac{d+1}{2}$, notice that $L_0^\perp\cdots L_r^\perp  \in \mathrm{Ann}(F+\lambda L_0^d)=(g_1,g_2)$, with $\deg g_1=\rk(F+\lambda L_0^d),\ \deg g_2=d-\rk(F+\lambda L_0^d)+2$, and $g_1$ is square free when $\rk(F+\lambda L_0^d)\leq \frac{d+1}{2}$. Since $[L_0]\in\mathcal F(F)$, we have $\rk(F+\lambda L_0^d)\in\{r,r+1\}$. Assume by contradiction that $\rk(F+\lambda L_0^d)=r$. 
    In such case $\deg g_1=r,\ \deg g_2=d-r+2> r+1$, so $g_1\mid L_0^\perp\cdots L_r^\perp $. It follows that $F+L_0^d=\sum_{i=1}^rM_{i}^d$, with $[M_i]\in \{[L_0],\dots,[L_r]\}$, for all $i=1,\dots,r$. However, $[L_0^d],\dots, [L_r^d]\in V_1^d$, and since $r+1<d+1$, they are linearly independent, thus $F+\lambda L_0^d$ cannot be written as a linear combination of only $r$ of the $L_i^d$'s when $\lambda\neq 0$, a contradiction. 
\end{proof}

A similar result can be proved also when $d$ is odd and $r$ is the generic Waring rank is $\frac{d+1}{2}$.

\begin{lemma}\label{lemma: genrkgrow}
    Let $F\in\sym^d(\CC^{2})$, $d\geq 3$ odd and $\rk (F)=\frac{d+1}{2}$. Suppose $[L_0]\in \mathcal F(F)$, then there exists $\lambda$ such that $\rk(F+\lambda L_0^d)=\rk (F)+1$.
\end{lemma}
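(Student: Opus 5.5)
The plan is to work with the degree-$k$ part of the apolar ideal along the pencil $F_\lambda:=F+\lambda L_0^d$, where $d=2k-1$ and $k=\rk(F)=\frac{d+1}{2}$. We show that some member of this pencil has a degree-$k$ apolar generator that is not square-free.

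\textbf{Structure of the apolar ideals.} For every $\lambda$ we have ${\rm Ann}(F_\lambda)=(g_{1,\lambda},g_{2,\lambda})$ with degrees summing to $d+2=2k+1$. The rank is $k$ exactly when the degree-$k$ generator is square-free, and $k+1$ otherwise.

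First I rule out a generator of degree $\le k-1$. By Sylvester's description, such a generator would give either $\rk(F_\lambda)\le k-1$ or $\rk(F_\lambda)\ge k+2$.
\begin{itemize}
\item $\rk(F_\lambda)\le k-1$ is impossible for $\lambda\neq 0$: it would force $[L_0]\in\W(F)$, contradicting $[L_0]\in\mathcal F(F)$.
\item $\rk(F_\lambda)\ge k+2$ is impossible since $\rk(F_\lambda)\le \rk(F)+1=k+1$.
\end{itemize}
Hence for every $\lambda$ the kernel of the catalecticant $C_\lambda=C_F+\lambda C_{L_0^d}\colon \sym^k\to\sym^{k-1}$ is one-dimensional. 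Here $C_\lambda$ is a $k\times(k+1)$ matrix, and $C_{L_0^d}$ has rank one.

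\textbf{The pencil of generators.} By Cramer's rule, the kernel is spanned by the vector of signed maximal minors of $C_\lambda$. Since we perturb by a rank-one matrix, each minor is affine in $\lambda$. So the degree-$k$ generator is
\[
h_\lambda=g_1+\lambda q,
\]
where $g_1$ is the square-free generator of ${\rm Ann}(F)$, whose roots are exactly $\W(F)$ because $F$ is identifiable.

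Expanding $h_\lambda\circ F_\lambda=0$ in powers of $\lambda$ gives
\[
g_1\circ L_0^d+q\circ F=0,\qquad q\circ L_0^d=0.
\]
Since $[L_0]\notin\W(F)$, we have $g_1\circ L_0^d\neq0$. Therefore $q\neq 0$ and $q$ is not proportional to $g_1$, so $\{h_\lambda\}$ is a genuine line in $\PP(\sym^k)$. Also $q\circ L_0^d=0$ means $q$ vanishes at the point corresponding to $L_0$, i.e.\ $L_0^\perp\mid q$.

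\textbf{Reduction to a discriminant.} Let $\Delta(\lambda)=\mathrm{disc}(g_1+\lambda q)$, a polynomial of degree at most $2k-2$ in $\lambda$. We have $\Delta(0)\neq0$ because $g_1$ is square-free. So any root of $\Delta$ is some $\lambda\neq0$, and for it $h_\lambda$ has a repeated factor, giving $\rk(F_\lambda)=k+1$. It therefore suffices to show that $\Delta$ is nonconstant. Geometrically, the line $\{[g_1+\lambda q]\}$ must meet the discriminant hypersurface (of degree $2k-2$) somewhere other than at $[q]$ alone.

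\textbf{Main obstacle: nonconstancy of $\Delta$.} The bad case is when all $2k-2$ intersections of the line with the discriminant are concentrated at $\lambda=\infty$, i.e.\ at $[q]$.

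My first attempt is to compute $q$ explicitly. Writing $F=\sum_{i=1}^k \beta_iL_i^d$, the first relation above shows that $q\circ F=-(g_1\circ L_0^d)$ is a nonzero multiple of the power $L_0^{k-1}$. Combined with $L_0^\perp\mid q$, this should pin down $q$ as $L_0^\perp$ times an explicit degree-$(k-1)$ form $p$, expressible through $L_1^\perp,\dots,L_k^\perp$.

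With $q=L_0^\perp p$ in hand, I would compute the leading coefficient of $\Delta(\lambda)$, or the order of tangency of the line at $[q]$. The goal is to show it cannot absorb the full degree $2k-2$, using that $g_1$ does not vanish at the root of $L_0^\perp$.

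A fallback is a limit argument. If $\Delta$ were constant, then $F_\lambda$ would have rank $k$ for all $\lambda$, with decompositions given by the roots of $h_\lambda$. Letting $\lambda\to\infty$, these decompositions degenerate to the roots of $q$, and one would derive a contradiction with $[L_0]\in\mathcal F(F)$. In the base case $d=3$ ($k=2$), $\Delta$ is an explicit quadratic in $\lambda$ and can be checked directly.
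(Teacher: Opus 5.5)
Your reduction is correct, and it is a sharper form of the paper's argument. The paper intersects the line $\langle [F],[L_0^d]\rangle$ with the hypersurface $\overline{S_{k+1}}$ of degree $2k(k-1)$, observes that only ranks $1$, $k$, $k+1$ occur on that line, and concludes from $2k(k-1)>d=\deg V^d_1$. You pull this back to the pencil $h_\lambda=g_1+\lambda q$ of degree-$k$ generators.

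\textbf{The gap.} Your proposal stops exactly where the content of the lemma lies: you never prove that $\Delta(\lambda)=\mathrm{disc}(g_1+\lambda q)$ is nonconstant. The explicit computation of $q$ is only announced. The limit argument names no contradiction: the roots of $h_\lambda$ tend to those of $q$, which include $[L_0]$, and nothing about $\mathcal F(F)$ is violated by that. Nonconstancy is not automatic for a line through a square-free point, since $\ell_2(\ell_1+\lambda\ell_2)$ is square-free for every $\lambda\in\CC$. Nor does a degree count settle it. By your own rank-one computation, the discriminant of the vector of maximal minors of the catalecticant (a degree-$2k(k-1)$ equation vanishing on $\overline{S_{k+1}}$) restricts to $\mu F+\lambda L_0^d$ as $\mu^{2(k-1)^2}\,\mathrm{disc}(\mu g_1+\lambda q)$. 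So comparing $2k(k-1)$ with $d$ cannot exclude that the line meets $\overline{S_{k+1}}$ only at $[L_0^d]$. What has to be shown is precisely that $\Delta$ is nonconstant.

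\textbf{One way to close it.} Let $c=\gcd(g_1,q)$ and write $g_1=c\,g'$, $q=c\,q'$ with $m=\deg g'=\deg q'\geq 1$.

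If $m\geq 2$, the base-point-free pencil $\langle g',q'\rangle$ defines a morphism $\PP^1\to\PP^1$ of degree $m$. Its total ramification is $2m-2$ by Riemann--Hurwitz, while the fibre corresponding to the member $q'$ carries at most $m-1$ of it. Hence some $g'+\lambda_0q'$ with $\lambda_0\in\CC$ has a double root.

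If $m=1$, then $L_0^\perp\nmid c$, because $c\mid g_1$ and $g_1(L_0)\neq 0$; so $q'$ is proportional to $L_0^\perp$. Take any root $p$ of $c$ (one exists, as $\deg c=k-1\geq 1$). It lies in $\W(F)$, hence $q'(p)\neq 0$, and $\lambda_0=-g'(p)/q'(p)$ makes $p$ a double root of $c\,(g'+\lambda_0 q')$.

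In both cases $\Delta(\lambda_0)=0$, so $\lambda_0\neq 0$ and $\rk(F_{\lambda_0})=k+1$ by Sylvester's description. With this step supplied, your argument becomes a complete proof, and it does not rely on the paper's comparison of $2k(k-1)$ with $\deg V^d_1$.
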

\begin{proof}
    Let $r_g=\rk(F)$ be the generic rank in $\sym^d(\CC^{2})$. We denote the locus of binary forms of rank $r$ as $S_r={\{[G]\in\PP(\sym^{d}(\CC^2))\mid \rk(G)=r\}}$, and $\overline{S_r}$ its Zariski closure. It follows from \cite[Proposition~4.3]{buczynski2018locus} that $ \overline{S_{r_g+1}}$ is a hypersurface, thus $\langle [F],[L_0^d]\rangle \cap \overline{S_{r_g+1}}\neq \emptyset$. 
    
    We recall that $\overline{S_{r_g+1}}=\left(\bigcup_{i=1}^{r_g-1}S_i\right)\bigcup\left(\bigcup_{j=r_g+1}^d S_j \right)$, cf. \cite[Theorem 1]{NTT}. {Since $[L_0]\in \mathcal{F}(F)$, we have} that $\rk(\mu F+\lambda L_0^d)\in\{1,r_g,r_g+1\}$ for $[\mu:\lambda]\in\PP^1$, thus $$\langle [F],[L_0^d]\rangle \cap \overline{S_{r_g+1}}=\langle [F],[L_0^d]\rangle\cap \left(S_1\cup S_{r_g+1}\right).$$ We recall that $S_1=V_1^d$, and since $d$ is odd, $d=2k-1$ for some $k$. By \cite[Theorem 4.1]{LS16} we have $\deg \overline{S_{r_g+1}}=2k(k-1)>2k-1=d=\deg V_1^d$, so $\langle [F],[L_0^d]\rangle\cap S_{r_g+1}\neq\emptyset$.
\end{proof}

\begin{corollary}\label{cor:rankgroweig}
    Let $2\leq r\leq\frac{d+1}{2}$, $[F]\in \sigma_r(V_1^d)$ generic. For every $[L_0]\in \Eig(F)$ there exists $\lambda\in\CC$ such that $\rk(F+\lambda L_0^d)=r+1$. In particular, if $r<\frac{d+1}{2}$ then it holds for all $\lambda\neq 0$.
\end{corollary}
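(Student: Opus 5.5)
The plan is to reduce the statement to \Cref{prop: subgenrkgrow} and \Cref{lemma: genrkgrow}. Both lemmas need as input that $\rk(F)=r$ and $[L_0]\in\mathcal F(F)$. So the only new content is to show that, for generic $[F]\in\sigma_r(V_1^d)$, the whole support of $\Eig(F)$ lies in the forbidden locus, i.e. $\Eig(F)\cap\W(F)=\emptyset$.

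First, genericity. For $r\leq\frac{d+1}{2}$ the generic element of $\sigma_r(V_1^d)$ has rank exactly $r$, and it is identifiable by Sylvester's theorem: the degree-$r$ generator of $\mathrm{Ann}(F)$ is square-free, unique when $r<\frac{d+2}{2}$. So $\W(F)=\{[L_1],\dots,[L_r]\}$ is a finite set. By \Cref{def: critical waring locus}, the locus of rank-$r$ forms with $\W(F)\cap\Eig(F)\neq\emptyset$ is contained in $\mathrm{WE}_{1,r}^d$. For $2\leq r\leq\frac{d+1}{2}$, \Cref{proposition: dim WE rnc } says $\mathrm{WE}_{1,r}^d$ is a proper closed subvariety of codimension one in the irreducible variety $\sigma_r(V_1^d)$. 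Hence a generic $[F]$ lies outside $\mathrm{WE}_{1,r}^d$, has rank $r$, and satisfies $\W(F)\cap\Eig(F)=\emptyset$. Every $[L_0]\in\Eig(F)$ therefore lies in $\mathcal F(F)=\PP^1\setminus\W(F)$.

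Then I split into cases.
\begin{itemize}
\item If $r<\frac{d+1}{2}$ (which forces $d\geq 3$ since $r\geq2$), \Cref{prop: subgenrkgrow} applies to every $[L_0]\in\mathcal F(F)$ and every $\lambda\neq0$. This gives $\rk(F+\lambda L_0^d)=r+1$ for all $\lambda\neq 0$, which is the ``in particular'' part.
\item If $r=\frac{d+1}{2}$, then $d=2r-1\geq 3$ is odd and $r$ is the generic rank. \Cref{lemma: genrkgrow} then gives some $\lambda$ with $\rk(F+\lambda L_0^d)=r+1$.
\end{itemize}

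The main obstacle is the boundary case $r=\frac{d+1}{2}$. There I need identifiability and the dimension count of \Cref{proposition: dim WE rnc } to remain valid, since that proof used identifiability of $z$. For $d$ odd and $r=\frac{d+1}{2}$, the generic form has a unique square-free generator of degree $r$, because the other generator has degree $d+2-r=r+1$. So identifiability holds and $\W(F)$ is finite. I would check this explicitly and otherwise cite the theorem as stated, since its range includes $r=\frac{d+1}{2}$. A minor point is that $\mathcal F(F)$ is the complement of $\W(F)$ in $\PP^1$, so every eigenvector outside $\W(F)$ is indeed forbidden.
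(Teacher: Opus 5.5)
Your proposal is correct and follows the paper's proof. The paper also uses the codimension-one statement of \Cref{proposition: dim WE rnc } to get $\Eig(F)\cap\W(F)=\emptyset$ for generic $[F]\in\sigma_r(V_1^d)$, so every eigenvector lies in $\mathcal F(F)$; it then applies \Cref{prop: subgenrkgrow} when $r<\frac{d+1}{2}$ and \Cref{lemma: genrkgrow} when $r=\frac{d+1}{2}$ (so $d$ is odd). Your extra checks on generic rank, identifiability at $r=\frac{d+1}{2}$, and $\mathcal F(F)=\PP^1\setminus\W(F)$ spell out steps the paper leaves implicit.
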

\begin{proof}
    By \Cref{proposition: dim WE rnc }, if $[F]\in \sigma_r(V_1^d)$ is generic, then $\Eig(F)\cap \W(F)=\emptyset$. Thus, if $[L_0]\in \Eig(F)$, then $[L_0]\in\mathcal F(F)$. The claim follows from \Cref{prop: subgenrkgrow} and \Cref{lemma: genrkgrow}. \qedhere
\end{proof}

\begin{remark}
    We notice that the analogous of \Cref{lemma: genrkgrow} for even degree remains open and the technique utilised is not easily applicable. Indeed, in this case $\overline{S_{r_g+1}}$ is a codimension two variety, therefore there is no guarantee that the intersection of $\overline{S_{r_g+1}}$ and the line $\langle [F], [L_0^d]\rangle$ is non-empty. Furthermore, while looking for an analogous of \Cref{cor:rankgroweig} for $d$ even and $r=r_g=\lceil (d+1)/2\rceil$, we should relax the statement by considering only generic forms of generic rank. Indeed, the result cannot hold for \textit{any} form: if $F=L^{\frac{d}{2}}M^{\frac{d}{2}}$, with $L$ and $M$ generic linear forms, then $\rk(F)=r_g$ and, by \Cref{cor: osculante}, we know that $\Eig(F)\subset \W(F)$, thus the linear forms in the eigenscheme cannot be used to increase the rank. Since monomials $L^{\frac{d}{2}}M^{\frac{d}{2}}$ are not generic, there might be hope for generic forms of generic rank.      
\end{remark}

In the next example, we present a form for which only part of the eigenscheme can be used to increase the rank. That is the suprageneric binary forms already considered in \Cref{example: rango d-1 waring e eigen si intersecano sempre} and for which we employ the expression coming from \cite{buczynski2018locus}.

{
\begin{example}
Let us consider again \Cref{example: rango d-1 waring e eigen si intersecano sempre}, so let us take $F=x^{d-1}y+(ax+by)^d$ with $b \neq 0$. We know that $\rk(F)=d-1$ and that $\mathcal{F}(F)=\{[1:0],[\frac{a}{b}:1] \}$. For any $\lambda\neq 0$, adding $\lambda x^d$ leads to $x^{d-1}(\lambda x+y)+(ax+by)^d$, which still has rank $d-1$, while adding $-(ax+by)^d$ clearly increases the rank. As observed in \Cref{example: rango d-1 waring e eigen si intersecano sempre}, if $a \in \{0, \pm b\sqrt{d-1}\}$, then $ax+by \in {\rm Eig}(F)$. At the same time, all the other eigenvectors do not increase the rank.

\end{example}}

We conclude by emphasizing that 
the relation between the eigenscheme and the forbidden loci of forms of suprageneric rank still requires deeper investigation which we leave for future studies.

\bibliographystyle{alpha}
\bibliography{References.bib}

\Addresses

\end{document}